\newtheorem{definition}{Definition}[section]
\newtheorem{theorem}{Theorem}[section]
\newtheorem{proposition}{Proposition}[section]
\newtheorem{lemma}{Lemma}[section]
\newtheorem{remark}{Remark}[section]
\newcommand{\R}{\mathbb{R}}
\newcommand{\ity}{\infty}
\begin{document}
\title[semilinear damped wave equations]{On the asymptotic profile of solutions to semilinear damped wave equations with critical nonlinearities}

\subjclass{35A01, 35B33, 35B44, 35K91}
\keywords{Modulus of continuity, Critical nonlinearity, Asymptotic profile, Global existence}
\thanks{$^* $\textit{Corresponding author:} Dinh Van Duong (vanmath2002@gmail.com)}

\maketitle
\centerline{\scshape  Trung Loc Tang$^{1}$, Dinh Van Duong$^{2,*}$ }
\medskip
{\footnotesize

{\footnotesize
	\centerline{$^1$ Department of Mathematics, Hanoi National University of Education}
	\centerline{No.136 Xuan Thuy road, Hanoi, Vietnam}}

	\centerline{$^2$Faculty of Mathematics and Informatics, Hanoi University of Science and Technology}
	\centerline{No.1 Dai Co Viet road, Hanoi, Vietnam}}

\begin{abstract}
    In this paper, we consider the Cauchy problem for a semilinear damped wave equation with the nonlinear term $|u|^{1+\frac{2}{n}}\mu(|u|)$, where $\mu$ is a modulus of continuity. In recent papers by Ebert–Girardi–Reissig \cite{EbeGirRei2020} and Girardi \cite{Girardi2024}, the authors obtained a sharp critical condition on $\mu$ in low space dimensions $n=1,2,3$, which determines the threshold between global (in time) existence of small data solutions and blow-up of solutions in finite time. Our new results are to prove that this condition remains valid in dimension $n=4$, together with the asymptotic profiles of global solutions. From this, we see that the behavior of the solution at $t \to \infty$ is identified by the Gauss kernel. Finally, a sharp lifespan estimate for local solutions is also derived in the case when blow-up occurs. 
\end{abstract}
\tableofcontents
\section{Introduction}
In this work, we consider the following Cauchy problem for semilinear damped wave equations:
\begin{equation} \label{Main.Eq.1}
\begin{cases}
u_{tt} -\Delta u + u_t= \mathcal{N}(u), &\quad x\in \R^n,\, t > 0, \\
(u, u_t)(0,x) = \varepsilon (u_0, u_1)(x), &\quad x\in \R^n, \\
\end{cases}
\end{equation}
where $\mathcal{N}(s) := |s|^{1+\frac{2}{n}}\mu(|s|)$ and the positive constant $\varepsilon$
describes the size of the initial data. The function $\mu : [0, \infty) \to [0, \infty)$ is a modulus of continuity, that is, $\mu$ is a continuous, concave, and increasing function satisfying $\mu(0) =0$. 

To begin with, we present the motivation for studying this model and review several well-known results related to it. The damped wave equation is a fundamental mathematical model describing wave motion when energy loss is present. Unlike the pure wave equation, it includes a damping term that accounts for friction, viscosity, or absorption in the medium. In the last half century, the following Cauchy problem has been extensively investigated by many mathematicians (see, for instance, \cite{Matsumura1976, Ikeda2019, Todorova2001, Zhang2001} and the references therein):
\begin{equation} \label{Main.Eq.2}
\begin{cases}
u_{tt} -\Delta u + u_t= |u|^p, &\quad x\in \R^n,\, t > 0, \\
(u, u_t)(0,x) = \varepsilon (u_0, u_1)(x), &\quad x\in \R^n, \\
\end{cases}
\end{equation}
where $p > 1$. In the classical paper \cite{Matsumura1976},
Matsumura was the first to consider the Cauchy problem of
nonlinear wave equations with dissipation terms. Their main tools are  some basic decay estimates established by  the
Fourier splitting method for the following linear damped wave equation:
\begin{equation} \label{Main.Eq.3}
\begin{cases}
u_{tt} -\Delta u + u_t= 0, &\quad x\in \R^n,\, t > 0, \\
(u, u_t)(0,x) = \varepsilon (u_0, u_1)(x), &\quad x\in \R^n. \\
\end{cases}
\end{equation}
In addition, they concluded that the damped wave equation has a diffusive structure as $t \to \infty$. This phenomenon has been further investigated in many subsequent papers, such as \cite{DabbiccoEbert2014, Michihisa2021, Takeda2015, Ikeda2019, ChenReissig2023} and the references therein. 
From this, they conclude that the diffusion phenomenon bridges the decay properties of solutions
to the Cauchy problem for the classical damped wave equation (\ref{Main.Eq.3}) and solutions to
the following Cauchy problem for the heat equation:
\begin{equation} \label{Main.Eq.4}
\begin{cases}
v_t -\Delta v = 0, &\quad x\in \R^n,\, t > 0, \\
v(0,x) = \varepsilon (u_0 + u_1)(x), &\quad x\in \R^n. \\
\end{cases}
\end{equation}
Specifically, they obtain the following  approximation of the solution $u$ by the Gauss kernel when $t$ is large:
\begin{align*}
    \mathcal{G}(t,x) := (4\pi t)^{-\frac{n}{2}} e^{-\frac{|x|^2}{4t}} = \mathfrak{F}^{-1}\big(e^{-|\xi|^2t}\big)(t,x).
\end{align*}
Furthermore, Michihisa \cite{Michihisa2021} discovered the higher-order asymptotic behavior of the solution to  (\ref{Main.Eq.3}) in the $L^2$ framework for any spatial dimension $n$ by using some tools, including Taylor series expansion and Faà di Bruno’s formula. Takeda \cite{Takeda2015} found the higher-order asymptotic behavior of solution to (\ref{Main.Eq.3})  in the $L^q$ framework for some $q\in (1,\ity)$ and concluded that  the effect of $u_{tt}$ in (\ref{Main.Eq.3}) is not
negligible for the third-order expansion. 

Taking into account the semilinear problem (\ref{Main.Eq.2})  under the additional regularity $L^1$ for the initial data, in \cite{IkehataMiyaokaNakatake2004, Todorova2001, IkehataOhta2002, IkehataTanizawa2005}, the authors showed that the Fujita exponent $p_F(n):= 1+2/n$ is the critical exponent of \eqref{Main.Eq.2} and the paper \cite{Zhang2001} showed that the value $p= p_F(n)$ belongs to the blow-up range.  Here, the critical exponent is understood as the threshold
between the global (in time) existence of small data solutions and the blow-up of solutions
even for small data. The Fujita exponent $p_F$
 is also the critical exponent for the semilinear problem (\ref{Main.Eq.4}), whose right-hand side is the nonlinear term $|v|^p$. Specifically, Ikehata et al. \cite{IkehataMiyaokaNakatake2004}  succeeded in proving the global existence and optimal decay
estimates of the total energy of the weak solutions to problem (\ref{Main.Eq.2}) with the power $p > p_F(n)$ for $n =1,2$, and $p > 2$ for $n \geq 3$. It should be mentioned that the results in \cite{Todorova2001}  fully depend on
the compactness assumption on the support of initial data, while in \cite{IkehataMiyaokaNakatake2004} these were removed. Next, Gallay-Raugel \cite{GallayRaugel1998} proved that global solutions of nonlinear damped wave equation
behaves like those of nonlinear heat equations with suitable data, including
more general nonlinearity for $n=1$. Karch \cite{Karch2000} proved the approximation
of the solution to (\ref{Main.Eq.2}) by the Gauss kernel for $p \geq 1 +4/n$. Nishihara \cite{Nishihara2003}  proved it for $p > 1+2/n$ when $n=3$, followed by \cite{MarcatiNishihara2003} for $n=1$, \cite{HosonoOgawa2004} for $n=2$, \cite{Narazaki20242} for $n=4, 5$ and \cite{Hayashi2004} for all $n \geq 1$. More gererally, Kawakami-Takeda \cite{KawakamiTakeda2016}  established the higher order asymptotic expansion
of the solution to (\ref{Main.Eq.2}) under suitable assumptions for the nonlinearity and the initial
data. For the blow-up range $p \leq 1 + 2/n$, according to the works \cite{LiZhou1995, Ikeda2016, LaiZhou2019}, the sharp lifespan estimates for blow-up solutions to (\ref{Main.Eq.2}) in all spatial dimensions have been investigated. Here, we denote by $T_{\varepsilon}$ the lifespan of solution in the following sense:
$$
\begin{aligned}
T_{\varepsilon}:=\sup\{& T > 0:\text{ there exists a unique local (in time) solution $u$ on $[0,T)$}\\
&\text{ with a fixed parameter $\varepsilon>0$}\}.
\end{aligned}
$$ 
Consequently, these papers provided the following sharp lifespan estimates:
\begin{align*}
    T_{\varepsilon} \sim \varepsilon^{-\frac{2(p-1)}{2-n(p-1)}} \text{ if } 1 < p < 1+\displaystyle\frac{2}{n} \quad\text{ and }\quad \log(T_{\varepsilon}) \sim  \varepsilon^{-\frac{2}{n}} \text{ if } p = 1+\displaystyle\frac{2}{n}.
\end{align*}
From the above discussion, one has the Fujita exponent $p_F= 1+2/n$ serves as the threshold separating the blow-up range from the global existence range of global (in time) solutions to problem (\ref{Main.Eq.2}) with small data. Moreover, the critical case $p = p_F$ belongs to the blow-up range. Recently, Ebert et al. \cite{EbeGirRei2020} investigated the Cauchy problem \eqref{Main.Eq.1} with nonlinear term $|u|^{1+\frac{2}{n}} \mu(|u|)$ and established the sharp condition on $\mu(s)$ that separates the blow-up case from the global existence case. Specifically, the problem (\ref{Main.Eq.1}) has a unique global (in time) small data solution if $n=1,2$ and the \textit{Dini condition} is satisfied. Moreover, problem (\ref{Main.Eq.1}) admits a blow-up solution in finite time for all spatial dimensions $n \geq 1$ if $\mu$ satisfies the \textit{non-Dini condition}.
\begin{definition}
    Let $\mu : [0, +\infty) \to [0, +\infty)$ be a modulus of continuity. Then, $\mu$ satisfies the Dini condition if
    \begin{align}
        \int_0^{1} \frac{\mu(s)}{s} ds < +\infty. \label{Condition1.1.1}
    \end{align}
    On the other hand, $\mu$ satisfies the non-Dini condition if (\ref{Condition1.1.1}) does not hold, i.e., if
    \begin{align}
        \int_0^{1} \frac{\mu(s)}{s} ds = +\infty. \label{Condition1.2.1}
    \end{align}
\end{definition}
 Extending the work \cite{EbeGirRei2020}, Girardi \cite{Girardi2024} showed that the results obtained in \cite{EbeGirRei2020} can be generalized to
more general semilinear evolution models with the Fujita-type critical exponent; in particular,
the critical behavior of nonlinearity is still described by the Dini condition (\ref{Condition1.1.1}). From this, they extended the global existence result of (\ref{Main.Eq.1}) to the case $n=3$ by using the estimates of Nishihara in \cite{Nishihara2003}. Continuing with this topic, Dao-Reissig \cite{AnhRei2021} studied for semilinear damped wave systems, while the papers of Chen-Girardi \cite{ChenGirardi2025} and Dao-Son \cite{DaoSon2025} investigated it in the framework of evolution equations. They also proved that, under the condition (\ref{Condition1.1.1}), these problems admit a unique global (in time) solution for small data. However, their assumptions on the spatial dimension are rather restrictive, since they are constrained by tools from harmonic analysis. In addition, the readers may refer to paper \cite{ChenReissig2024} for the semilinear classical wave equations. Our first main goal in this paper extends the global existence property of (\ref{Main.Eq.1}) to dimensions $1 \leq n \leq 4$ under condition (\ref{Condition1.1.1}) by applying the linear estimates obtained in \cite{Ikeda2019}. Next, our second main objective is to determine the asymptotic behavior of global (in time) solutions to (\ref{Main.Eq.1}) in spatial dimensions $1 \leq n \leq 4$. More specifically, we show that the behavior of the solution at $t \to \infty$ is identified by the Gauss kernel under the Dini condition (\ref{Condition1.1.1}) for the global existence 
\begin{align*}
    \lim_{t \to \infty} t^{\frac{n}{2}(1-\frac{1}{q})}\left\|u(t,\cdot)- M \mathfrak{F}^{-1}\big(e^{-|\xi|^2t}\big)(t,\cdot) \right\|_{L^q} = 0,
\end{align*}
for all $q \in \left[\min\{2, 1+2/n\}, +\infty\right]$ and $M$ defined by (\ref{Defi_1}). Finally, we will provide some remarks on sharp lifespan estimates for blow-up solutions to problem (\ref{Main.Eq.1}) under the non-Dini condition (\ref{Condition1.2.1}).
\vspace{0.3cm}

\textbf{Notations.} 
\begin{itemize}[leftmargin=*]
    \item We write $f\lesssim g$ when there exists a constant $C>0$ such that $f\leq Cg$, and $f \sim g$ when $g\lesssim f\lesssim g$. 

\item For any $\eta \in \mathbb{R}$, we denote by $[\eta]^+ := \max\{0, \eta\}$, its positive part.
    
    \item In addition, we denote $\widehat{w}(t,\xi):= \mathfrak{F}_{x\rightarrow \xi}\big(w(t,x)\big)$ as the Fourier transform with respect to the spatial variable of a function $w(t,x)$ and $\mathfrak{F}^{-1}$ represents the inverse Fourier transform. 
    \item As usual, $H^{a}_m$ and $\dot{H}^{a}_m$, with $m \in (1, \infty), a \geq 0$, denote potential spaces based on $L^m$ spaces. Here $\langle \nabla\rangle^{a}$ and $|\nabla|^{a}$ stand for the pseudo-differential operators with symbols $\big<\xi\big>^{a}$ and $|\xi|^{a}$, respectively, where the symbol $\langle x\rangle := \sqrt{|x|^2 +1} $ denotes the Japanese bracket.
\end{itemize}

\textbf{Main results.} Let us state the global (in time) existence of small data solutions, together with their asymptotic behavior, which will be proved in this paper.
\vspace{0.2cm}

\begin{theorem}[\textbf{Global existence}]\label{Theorem1}
     Let $1 \leq n \leq 4$. The modulus of continuity $\mu(s) $ satisfies the Dini condition (\ref{Condition1.1.1})
     and
     \begin{align}\label{Condition1.1.2}
         s|\mu'(s)| \lesssim |\mu(s)|  \quad \text{ for } s \in (0, 1].
     \end{align}
    In addition, we fix $$\alpha := \min\left\{2, 1+\frac{2}{n}\right\}, \,\,\beta_\alpha := (n-1)\left(\frac{1}{\alpha}-\frac{1}{2}\right)$$ and assume that
    \begin{align*}
        r \in \left(2, \frac{2n}{[n-2]^+}\right).
    \end{align*}
    Furthermore, the initial data $(u_0, u_1)$ satisfies
    \begin{align*}
        (u_0, u_1) \in \mathcal{D} := \left(H^2 \cap H^{2}_r \cap H^{\beta_\alpha}_\alpha\cap L^1  \right)\times \left(H^1 \cap H^1_r \cap L^{\alpha} \cap L^1\right).
    \end{align*}
    Then, there exists a constant $\bar{\varepsilon} > 0$ such that for any $\varepsilon \in (0, \bar{\varepsilon}]$, problem (\ref{Main.Eq.1}) admits a unique global (in time) Sobolev solution
    \begin{align*}
        u \in \mathcal{C}([0, \infty), H^2 \cap L^\alpha \cap L^\infty)
    \end{align*}
    satisfying the following estimates:
    \begin{align*}
        \|u(t,\cdot)\|_{\dot{H}^2} &\lesssim \varepsilon(1+t)^{-\frac{n}{4}-1} \|(u_0,u_1)\|_{\mathcal{D}},\\
        \|u(t,\cdot)\|_{L^{\alpha}} &\lesssim \varepsilon(1+t)^{-\frac{n}{2}(1-\frac{1}{\alpha})} \|(u_0,u_1)\|_{\mathcal{D}},\\
        \|u(t,\cdot)\|_{L^\infty} &\lesssim \varepsilon(1+t)^{-\frac{n}{2}}  \|(u_0,u_1)\|_{\mathcal{D}}.
    \end{align*}
\end{theorem}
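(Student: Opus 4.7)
The plan is a standard Banach fixed-point argument applied to the Duhamel operator
\[
\Phi[u](t) := \varepsilon K_0(t) \ast u_0 + \varepsilon K_1(t) \ast u_1 + \int_0^t K_1(t-s) \ast \mathcal{N}(u(s))\,ds,
\]
where $K_0,K_1$ denote the linear propagators of \eqref{Main.Eq.3}. I would work in the Banach space
\[
X(T) := \left\{u \in \mathcal{C}([0,T]; H^2 \cap L^\alpha \cap L^\infty) : \|u\|_{X(T)} < \infty \right\},
\]
whose norm weights the three target rates in the statement by $(1+t)^{n/4+1}$, $(1+t)^{n/2(1-1/\alpha)}$, and $(1+t)^{n/2}$ on $\|\cdot\|_{\dot{H}^2}$, $\|\cdot\|_{L^\alpha}$, $\|\cdot\|_{L^\infty}$ respectively. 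The linear $L^m$-framework decay estimates of Ikeda \cite{Ikeda2019} --- the tool the authors explicitly invoke to relax the dimensional restriction --- then yield $\|\varepsilon K_0(t)\ast u_0 + \varepsilon K_1(t) \ast u_1\|_{X(T)} \leq C\varepsilon\|(u_0,u_1)\|_{\mathcal{D}}$; the data space $\mathcal{D}$ is designed precisely so that the $L^1$ mass estimates (providing polynomial decay for low frequencies) and the $H^2$, $H^2_r$, $H^{\beta_\alpha}_\alpha$ regularity (handling the wave-type regularity loss that appears when passing to $L^\alpha$ with $\alpha<2$) are simultaneously available.

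The heart of the proof is the nonlinear estimate. Monotonicity of $\mu$ gives
\[
\|\mathcal{N}(u(s))\|_{L^q} \leq \mu(\|u(s)\|_{L^\infty})\,\|u(s)\|_{L^{q(1+2/n)}}^{1+2/n}, \qquad q \in \{1,\alpha\},
\]
and Hölder interpolation between $L^\alpha$ and $L^\infty$ combined with the $X(T)$-decays converts this, at $q=1$, into $(1+s)^{-1}\|u\|_{X(T)}^{1+2/n}\mu\bigl(C\|u\|_{X(T)}(1+s)^{-n/2}\bigr)$. Splitting the Duhamel integral at $s=t/2$, I would use $L^1 \to L^q$ propagator decay on $[0,t/2]$ and $L^\alpha \to L^q$ decay on $[t/2,t]$, for each of the three target norms. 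Everything then reduces to bounding
\[
I(t) := \int_0^{t/2}(1+s)^{-1}\mu\bigl(C\|u\|_{X(T)}(1+s)^{-n/2}\bigr)\,ds
\]
uniformly in $t$. Under the substitution $\tau = C\|u\|_{X(T)}(1+s)^{-n/2}$ one has $ds/(1+s) = -(2/n)\,d\tau/\tau$, so
\[
I(t) = \frac{2}{n}\int_{C\|u\|_{X(T)}(1+t/2)^{-n/2}}^{C\|u\|_{X(T)}}\frac{\mu(\tau)}{\tau}\,d\tau \leq \frac{2}{n}\int_0^{C\|u\|_{X(T)}}\frac{\mu(\tau)}{\tau}\,d\tau,
\]
which is finite precisely by the Dini condition \eqref{Condition1.1.1}. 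Without $\mu$ this integral would diverge logarithmically, reflecting the Fujita criticality. Summing all pieces yields
\[
\|\Phi[u]\|_{X(T)} \leq C\varepsilon\|(u_0,u_1)\|_{\mathcal{D}} + C_1\|u\|_{X(T)}^{1+2/n}\mu\bigl(C\|u\|_{X(T)}\bigr),
\]
so $\Phi$ maps a ball of radius $\sim\varepsilon$ into itself once $\bar\varepsilon$ is chosen small enough.

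For the contraction estimate I would employ
\[
|\mathcal{N}(u)-\mathcal{N}(v)| \lesssim \bigl(|u|^{2/n}+|v|^{2/n}\bigr)\mu(|u|+|v|)\,|u-v|,
\]
derived from the fundamental theorem of calculus together with the regularity hypothesis \eqref{Condition1.1.2}, and rerun the same splitting and Dini substitution. The main obstacle is exactly the critical integral $I(t)$: the power of $(1+s)$ coming out of the $L^1$ bound on $\mathcal{N}(u)$ must be exactly $-1$ so that the change of variables delivers the Dini integrand $\mu(\tau)/\tau$, and preserving this balance throughout interpolation and the linear decay estimates is delicate --- especially in the extremal case $n=4$, where $\alpha = 3/2$, $\beta_\alpha = 1/2$, and the embedding $H^2_r \hookrightarrow L^\infty$ (valid only because $r>2=n/2$) has to be pressed into service to close the $L^\infty$ part of the argument.
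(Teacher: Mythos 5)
Your self-map estimate is essentially the paper's argument: same weighted space $X(T)$ with the three target rates, same splitting of the Duhamel integral at $t/2$, the Ikeda et al.\ linear estimates, and the change of variables turning the critical time integral into the Dini integral $\int_0^{C\varepsilon_0}\mu(\tau)\tau^{-1}\,d\tau$. (Two small points: to close the $\dot H^2$ and $L^\infty$ components you also need the gradient estimate $\|\nabla\mathcal{N}(u)\|_{L^\theta}\lesssim \mu(\varepsilon_0(1+\tau)^{-n/2})(1+\tau)^{-\frac n2(1+\frac2n-\frac1\theta)-\frac12}\|u\|_{X(T)}^{1+2/n}$, obtained from \eqref{Condition1.1.2} and fractional Gagliardo--Nirenberg, since the high-frequency part of the linear estimate costs $H^{1}_r$-type regularity of the nonlinearity; and in your final self-map bound the small factor should be the Dini integral $\mathcal{I}(C\|u\|_{X(T)})$, as your own computation of $I(t)$ shows, not $\mu(C\|u\|_{X(T)})$ --- harmless, but inconsistent with the line above it.)

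The genuine gap is in the contraction step. You propose to ``rerun the same splitting'' for $\Phi[u]-\Phi[v]$ in the full norm $X(T)$, using only the pointwise bound $|\mathcal{N}(u)-\mathcal{N}(v)|\lesssim(|u|^{2/n}+|v|^{2/n})\mu(|u|+|v|)|u-v|$. That bound controls the \emph{difference} of the nonlinearities, but the $\dot H^2$ and $L^\infty$ components of $\|\Phi[u]-\Phi[v]\|_{X(T)}$ require, via the linear estimates, norms like $\|\mathcal{N}(u)-\mathcal{N}(v)\|_{\dot H^1}$ and $\|\mathcal{N}(u)-\mathcal{N}(v)\|_{H^{d+\beta_r-1}_r}$, i.e.\ a Lipschitz-type estimate on $\nabla\big(\mathcal{N}(u)-\mathcal{N}(v)\big)$. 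This would force you to estimate differences of $\mathcal{N}'(s)=c\,|s|^{2/n}\mathrm{sgn}(s)\mu(|s|)+\dots$, and for $n\ge 3$ the exponent $2/n<1$ together with the fact that $\mu$ is merely a modulus of continuity makes $\mathcal{N}'$ non-Lipschitz, so the map $\Phi$ is not a contraction in $X(T)$ (at least not by this argument). The paper avoids this by contracting only in the weaker metric $Y(T)=L^\infty([0,T];L^2)$: the difference of nonlinearities is placed in $L^m$ with $m\in(1,2)$, $1/m=1/2+1/\eta$, $\eta$ large, which needs nothing beyond your pointwise bound, and the resulting time integrals are again controlled by $\mathcal{I}(\varepsilon_0)$. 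The price is that the limit of the Cauchy sequence is a priori only in $Y(T)$, so one needs the additional Lemma showing that the ball $X(T,\varepsilon_0)$ is closed in $Y(T)$ (Banach--Alaoglu plus weak-$*$ lower semicontinuity of the $X(T)$-norm and uniqueness of distributional limits). Your proposal is missing both this two-norm scheme and the closedness lemma, and without them the fixed-point argument as written does not close; the remaining items in the statement (continuity in time of the Duhamel term via dominated convergence, and uniqueness in the full class via Gronwall in $Y(t)$) are also needed but are routine once that structure is in place.
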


\begin{remark}
\fontshape{n}
\selectfont
    From the statements of Theorem \ref{Theorem1}, we assert that the Dini condition (\ref{Condition1.1.1}) remains indispensable for the global (in time) existence of small data solutions to problem (\ref{Main.Eq.1}) in $4$-dimension space. This result extends Theorem 3 in \cite{EbeGirRei2020} and Proposition 4.1  in \cite{Girardi2024}.  For some examples of modulus of continuity $\mu$ satisfying the conditions of Theorem \ref{Theorem1}, the reader may refer to Example 1 in \cite{EbeGirRei2020}. 
\end{remark}

\begin{remark}
\fontshape{n}
\selectfont
    The approach used to prove Theorem \ref{Theorem1} can still be effectively applied to the weakly coupled system of equations \eqref{Main.Eq.1}. Namely, we can extend the global existence result in \cite{AnhRei2021} to higher dimensions $n=3,4$.
\end{remark}

\begin{theorem}[\textbf{Asymptotic profiles}] \label{Theorem3}
    Assume that the assumptions of Theorem \ref{Theorem1} hold, the global (in time)
small data solution to (\ref{Main.Eq.1}) satisfies the following estimates for $t \gg 1$:
\begin{align*}
    \|u(t,\cdot) - M \mathcal{G}(t,\cdot)\|_{L^\alpha} &= o(t^{-\frac{n}{2}(1-\frac{1}{\alpha})}),\\
    \|u(t,\cdot)-M \mathcal{G}(t,\cdot)\|_{\dot{H}^2} &= o(t^{-\frac{n}{4}-1}),\\
    \|u(t,\cdot)-M \mathcal{G}(t,\cdot)\|_{L^\infty} &= o(t^{-\frac{n}{2}}),
\end{align*}
where
\begin{align}\label{Defi_1}
    M := \varepsilon\int_{\mathbb{R}^n} (u_0(x) +  u_1(x)) dx + \int_0^{\infty} \int_{\mathbb{R}^n} \mathcal{N}(u(\tau,x)) dx d\tau.
\end{align}
\end{theorem}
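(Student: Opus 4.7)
The plan is to combine the linear diffusion phenomenon for the damped wave propagator with a standard split of the Duhamel integral, the main new ingredient being the absolute convergence of the source integral defining $M$ under the Dini condition (\ref{Condition1.1.1}). Denoting by $E_0(t),E_1(t)$ the linear propagators of (\ref{Main.Eq.3}), I would start from Duhamel's formula
\[
u(t,\cdot) = \varepsilon E_0(t,\cdot)\ast u_0 + \varepsilon E_1(t,\cdot)\ast u_1 + \int_0^t E_1(t-\tau,\cdot)\ast\mathcal{N}(u(\tau,\cdot))\,d\tau,
\]
and split
\[
u - M\mathcal{G} = \Big[u^{\mathrm{lin}} - \varepsilon\Big(\!\int_{\R^n}\!(u_0+u_1)\,dx\Big)\mathcal{G}\Big] + \Big[\!\int_0^t\! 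E_1(t-\tau)\ast\mathcal{N}(u(\tau))\,d\tau - \Big(\!\int_0^\infty\!\!\int_{\R^n}\!\mathcal{N}(u)\,dx\,d\tau\Big)\mathcal{G}\Big].
\]
For the first bracket, the Nishihara-type linear diffusion phenomenon (already invoked for Theorem \ref{Theorem1} through \cite{Ikeda2019}, see also \cite{Nishihara2003, HosonoOgawa2004, Narazaki20242}) yields the claimed $o(\cdot)$-rates in $L^\alpha$, $L^\infty$, and $\dot H^2$ from the hypothesis $(u_0,u_1)\in\mathcal{D}$ alone.

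Before attacking the second bracket, I would prove that $M$ is finite. Using the $L^\infty$ and $L^\alpha$ decay rates of Theorem \ref{Theorem1},
\[
\|\mathcal{N}(u(\tau,\cdot))\|_{L^1} \le \mu(\|u(\tau)\|_{L^\infty})\,\|u(\tau)\|_{L^\alpha}^\alpha \lesssim \mu\!\big(C\varepsilon(1+\tau)^{-n/2}\big)\,\varepsilon^\alpha(1+\tau)^{-\frac{n}{2}(\alpha-1)},
\]
with $\tfrac{n}{2}(\alpha-1)=1$ for $n\ge 3$. The substitution $s=C\varepsilon(1+\tau)^{-n/2}$ converts $\int_0^\infty\mu(C\varepsilon(1+\tau)^{-n/2})(1+\tau)^{-1}d\tau$ into a constant multiple of $\int_0^{C\varepsilon}\mu(s)/s\,ds$, which is finite precisely by (\ref{Condition1.1.1}); the cases $n=1,2$ are easier after interpolation with $\|u\|_{L^\infty}$. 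Consequently $\mathcal{N}(u)\in L^1([0,\infty);L^1(\R^n))$, $M$ is well-defined, and the tail $\int_t^\infty\|\mathcal{N}(u(\tau))\|_{L^1}d\tau\to 0$ as $t\to\infty$.

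I then decompose the nonlinear bracket as $J_1+J_2+J_3$, where
\[
J_1 := \int_0^{t/2}\!\Big[E_1(t-\tau)\ast\mathcal{N}(u(\tau)) - \mathcal{G}(t,\cdot)\!\int_{\R^n}\!\mathcal{N}(u(\tau,y))\,dy\Big]d\tau,
\]
$J_2 := \int_{t/2}^t E_1(t-\tau)\ast\mathcal{N}(u(\tau))\,d\tau$, and $J_3 := -\mathcal{G}(t,\cdot)\int_{t/2}^\infty\!\int_{\R^n}\mathcal{N}(u(\tau,y))\,dy\,d\tau$. The term $J_3$ is $o(t^{-\frac{n}{2}(1-\frac{1}{q})})$ by the tail bound and $\|\mathcal{G}(t,\cdot)\|_{L^q}\sim t^{-\frac{n}{2}(1-\frac{1}{q})}$. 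For $J_2$, since $t-\tau\le t/2$, I apply the linear $L^1\to L^q$ estimate for $E_1$ and exploit the smallness of $\mu(\|u(\tau)\|_{L^\infty})$ on $\tau\ge t/2$ together with the integrability just established, producing decay strictly faster than the target. For $J_1$, inserting $\pm\mathcal{G}(t-\tau)\ast\mathcal{N}(u(\tau))$ separates the integrand into the kernel-difference $\big(E_1(t-\tau)-\mathcal{G}(t-\tau)\big)\ast\mathcal{N}(u(\tau))$ (which enjoys strictly better decay by the sharp linear diffusion phenomenon) and into $\mathcal{G}(t-\tau)\ast\mathcal{N}(u(\tau))-\mathcal{G}(t,\cdot)\int\mathcal{N}(u(\tau,y))\,dy$, which I treat via a first-order Taylor expansion of $\mathcal{G}(t-\tau,x-y)$ around $(\tau,y)=(0,0)$. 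The resulting error is majorized pointwise in $\tau$ by $\tau\|\partial_t\mathcal{G}(t,\cdot)\|_{L^q}\|\mathcal{N}(u(\tau))\|_{L^1} + \|\nabla\mathcal{G}(t,\cdot)\|_{L^q}\,\||y|\mathcal{N}(u(\tau))\|_{L^1}$, and each summand is $\tau$-integrable after analogous Dini-based substitutions.

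The main obstacle will be upgrading these $O$-bounds to the genuine $o$-statements claimed. I would do so by dominated convergence on $[0,t/2]$: for each fixed $\tau$ the $J_1$-integrand, once divided by $t^{-\frac{n}{2}(1-\frac{1}{q})}$, tends pointwise to $0$ as $t\to\infty$, while the Dini-integrable bounds above furnish a $\tau$-integrable majorant uniform in large $t$. The $\dot H^2$ profile is the most delicate, since one cannot freely differentiate $\mathcal{N}(u)$ given the limited regularity of $s\mapsto s^{1+2/n}\mu(s)$; both derivatives must be placed on the propagators $E_1$ and $\mathcal{G}$, which is precisely where the high-regularity $E_1$ estimates from \cite{Ikeda2019} (already invoked in the proof of Theorem \ref{Theorem1}) become indispensable.
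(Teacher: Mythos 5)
Your skeleton (Duhamel split, kernel-difference estimate on $[0,t/2]$, replacement of $\mathcal{G}(t-\tau)\ast\mathcal{N}(u(\tau))$ by its mass times $\mathcal{G}(t)$, tail term, and the Dini-based finiteness of $M$) coincides with the paper's proof, but two execution steps fail as written. The most serious is $J_2$: on $[t/2,t]$ you measure $\mathcal{N}(u(\tau))$ only in $L^1$ and invoke $L^1\to L^q$ smoothing. Since $\|\mathcal{N}(u(\tau))\|_{L^1}\lesssim(1+\tau)^{-1}\mu\big(\varepsilon_0(1+\tau)^{-\frac{n}{2}}\big)$ and the kernel factor $(1+t-\tau)^{-\frac{n}{2}(1-\frac{1}{q})-\frac{s}{2}}$ is at best integrable in $\tau$ over the near interval, this route only gives $\|J_2\|\lesssim(1+t)^{-1}\mu\big(\varepsilon_0(1+t/2)^{-\frac{n}{2}}\big)$ up to logarithms, which is not $o(t^{-\frac{n}{2}})$ in $L^\infty$ for $n=3,4$ and is never $o(t^{-\frac{n}{4}-1})$ in $\dot H^2$. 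The paper's $K_2$ instead uses $L^\theta\to L^\theta$ (resp. $L^r\to L^\infty$) estimates, i.e., it measures $\mathcal{N}(u(\tau))$ in $L^\theta$, $\dot H^1$ and $H^1_r$ via Lemma \ref{lemma1.3}, so that the solution's own decay $(1+\tau)^{-\frac{n}{2}(1+\frac{2}{n}-\frac{1}{\theta})}$ produces the target rate with the vanishing factor $\mu\big(\varepsilon_0(1+t/2)^{-\frac{n}{2}}\big)$. This forces one derivative onto $\mathcal{N}(u)$, which is admissible thanks to (\ref{Condition1.1.2}) since $|\nabla\mathcal{N}(u)|\lesssim\mu(|u|)|u|^{\frac{2}{n}}|\nabla u|$; your blanket refusal to differentiate $\mathcal{N}(u)$ discards exactly the tool needed for the $\dot H^2$ and $L^\infty$ profiles (and for the high-frequency pieces, which require $H^{s+\beta_\theta-1}_\theta$-type norms of $\mathcal{N}(u)$).

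The second problem is your Taylor treatment of the Gauss-replacement part of $J_1$. The spatial error is majorized by $\|\nabla\mathcal{G}(t,\cdot)\|_{L^q}\,\||y|\mathcal{N}(u(\tau,\cdot))\|_{L^1}$, but no first spatial moment of $\mathcal{N}(u)$ is available: the data class $\mathcal{D}$ carries no weight and Theorem \ref{Theorem1} yields no weighted decay of $u$, so this quantity may be infinite and the announced Dini substitution cannot start. The paper avoids moments by splitting the $y$-integration at $|y|=t^{1/4}$ (terms $K_{41}$, $K_{42}$): for $|y|\le t^{1/4}$ the gradient of $\mathcal{G}$ costs $t^{-1/2}$ against a loss of only $t^{1/4}$, while for $|y|\ge t^{1/4}$ one uses that $\int_0^{t/2}\int_{|y|\ge t^{1/4}}\mathcal{N}(u)\,dy\,d\tau\to0$ by absolute convergence of the double integral under (\ref{Condition1.1.1}). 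Likewise your temporal error $\tau\|\partial_t\mathcal{G}(t,\cdot)\|_{L^q}\|\mathcal{N}(u(\tau))\|_{L^1}$ is not $\tau$-integrable: $\tau(1+\tau)^{-1}\mu\big(\varepsilon_0(1+\tau)^{-\frac{n}{2}}\big)\sim\mu\big(\varepsilon_0(1+\tau)^{-\frac{n}{2}}\big)$ need not belong to $L^1(d\tau)$ for a Dini modulus (e.g. $\mu(s)=(\log(1/s))^{-2}$); what saves this term, as in the paper's $K_3$, is the averaged smallness $t^{-1}\int_0^{t/2}\mu\big(\varepsilon_0(1+\tau)^{-\frac{n}{2}}\big)d\tau\to0$ obtained by L'Hospital, not integrability. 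Consequently the dominated-convergence majorant you propose is not known to be finite; it must be replaced by the moment-free bound $\|\mathcal{N}(u(\tau))\|_{L^1}$ together with these averaging and cut-off arguments. Your treatment of the linear part, the finiteness of $M$, and the tails $J_3$/$K_5$ is correct and matches the paper.
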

\begin{remark}
\fontshape{n}
\selectfont
    From the statements of Theorem \ref{Theorem3}, we  see that the asymptotic behavior of the global solution at $t \to \infty$ is identified by the Gauss kernel under the Dini condition (\ref{Condition1.1.1}) as follows:
\begin{align*}
    u(t,\cdot) \sim \left(\varepsilon\int_{\mathbb{R}^n} (u_0(x) +  u_1(x)) dx + \int_0^{\infty} \int_{\mathbb{R}^n} \mathcal{N}(u(\tau,x)) dx d\tau\right) \mathfrak{F}^{-1}\big(e^{-|\xi|^2t}\big)(t,\cdot),  
\end{align*}
in the $L^q$ framework, for all $q \in \left[\min\{2, 1+2/n\}, +\infty\right]$.  From this, we can conclude the diffusive structure of problem (\ref{Main.Eq.1}) when it admits a unique global solution because it presents the following optimal decay estimates:
\begin{align*}
    \|u(t,\cdot)\|_{L^q} \sim t^{-\frac{n}{2}(1-\frac{1}{q})} \quad\text{ and } \quad
    \|u(t,\cdot)\|_{\dot{H}^2} \sim t^{-\frac{n}{4}-1},
\end{align*}
where $t \gg 1$. These estimates can be derived from the conclusion of Theorem \ref{Theorem3} together with the triangle inequality.
\end{remark}
\textbf{This paper is organized as follows:} In Section \ref{Section2}, we provide the proof of global (in time) existence results for solutions to the problem (\ref{Main.Eq.1}). Subsequently, in Section \ref{Section_Asym}, we describe the asymptotic behavior of the global solution to (\ref{Main.Eq.1}). Finally, we provide some remarks on sharp lifespan estimates in Section \ref{Lifespan}.

\section{Global Existence}\label{Section2}
\subsection{Linear estimates and solution spaces} 
To begin with, we can write the solution to $(\ref{Main.Eq.3})$ using the formula
\begin{equation*}
 u^{\rm lin}(t,x) = \varepsilon (\mathcal{K}(t,x) +\partial_t \mathcal{K}(t,x))\ast_x u_0(x) + \varepsilon \mathcal{K}(t,x) \ast_x u_1(x),
\end{equation*}
so that the solution to $(\ref{Main.Eq.1})$ becomes
\begin{align}
    u(t,x)
    = u^{\rm lin}(t,x) + u^{\rm non}(t,x), \label{Solution}
\end{align}
thanks to Duhamel's principle, where
\begin{align*}
    u^{\rm non}(t,x) := \int_0^t \mathcal{K}(t-\tau,x) \ast_x \mathcal{N}(u(\tau,x)) d\tau
\end{align*}
and the linear kernel $\mathcal{K}(t,x)$ is defined by
\begin{align*}
    \mathcal{K}(t,x) :=  \begin{cases}
        \vspace{0.3cm}\mathfrak{F}^{-1}\left(\displaystyle\frac{e^{-\frac{t}{2}}\sinh{\left(t \sqrt{\frac{1}{4} -|\xi|^2}\right)}}{\sqrt{\frac{1}{4}- |\xi|^2}} \right)(t,x) &\text{ if } |\xi| \leq \displaystyle\frac{1}{2},\\
        \displaystyle\mathfrak{F}^{-1}\left(\frac{e^{-\frac{t}{2}}\sin{\left(t \sqrt{|\xi|^2-\frac{1}{4}}\right)}}{\sqrt{|\xi|^2-\frac{1}{4}}}\right)(t,x) &\text{ if } |\xi| > \displaystyle\frac{1}{2}.
    \end{cases}
\end{align*}
Let $\chi_k= \chi_k(r)$ with $k\in\{\rm L,H\}$ be smooth cut-off functions having the following properties:
\begin{align*}
&\chi_{\rm L}(r)=
\begin{cases}
1 &\quad \text{ if }r\le \varepsilon^*/2, \\
0 &\quad \text{ if }r\ge \varepsilon^*,
\end{cases}
\text{ and } \qquad
\chi_{\rm H}(r)= 1 -\chi_{\rm L}(r),
\end{align*}
where $\varepsilon^*$ is a sufficiently small constant.
It is obvious to see that $\chi_{\rm H}(r)= 1$ if $r \geq \varepsilon^*$ and $\chi_{\rm H}(r)= 0$ if $r \le \varepsilon^*/2$.\\
Now, we restate the following important result.

\begin{lemma}[\textbf{Linear Estimates}]\label{LinearEstimates}
    Let $n \geq 1,\, j \in \{ 0,1\}, \,
    1 \leq \rho \leq q < \infty$, $q \ne 1$, $\beta_q := (n-1)\left|\frac{1}{2}-\frac{1}{q}\right|$ and $s_1 \geq s_2 \geq 0$.  Then, the following estimate holds for all $t > 0$:
    \begin{align*}
        &\|\partial_t^j |\nabla|^{s_1} \mathcal{K}(t,x)\ast_x \varphi(x)\|_{L^q}\\ &\quad\lesssim (1+t)^{-\frac{n}{2}(\frac{1}{\rho}-\frac{1}{q})-\frac{s_1-s_2}{2}-j} \||\nabla|^{s_2} \chi_{\rm L}(|\nabla|)\varphi\|_{L^{\rho}} + e^{-ct} \||\nabla|^{s_1}\chi_{\rm H}(|\nabla|) \varphi\|_{H_{q}^{\beta_q+j-1}},
        \end{align*}
        where $c$ is a suitable positive constant.
    Furthermore, we obtain the following estimate for $ 1\leq \rho \leq \infty, 1 < r < \infty$ and $d > \displaystyle\frac{n}{r}$:
    \begin{align*}
        &\|\partial_t^j |\nabla|^{s_1}\mathcal{K}(t,x) \ast_x \varphi(x) \|_{L^{\infty}}\\ &\quad\lesssim (1+t)^{-\frac{n}{2\rho}-\frac{s_1-s_2}{2}-j} \||\nabla|^{s_2} \chi_{\rm L}(|\nabla|) \varphi\|_{L^\rho} + e^{-ct} \||\nabla|^{s_1 + d} \chi_{\rm H}(|\nabla|) \varphi\|_{H^{\beta_r+j -1}_r}.
    \end{align*}
\end{lemma}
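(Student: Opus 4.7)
The plan is to establish these estimates by the classical low/high frequency splitting dictated by the cut-offs $\chi_{\rm L}$ and $\chi_{\rm H}$: the low-frequency component behaves like the heat kernel, giving polynomial decay, while the high-frequency component behaves like a damped wave propagator, giving exponential decay at the cost of regularity. Since the statement is close to results already available for the linear damped wave equation (essentially those of Matsumura, Ikehata and Ikeda–Taniguchi–Wakasugi \cite{Ikeda2019}), my proof would mostly amount to reorganizing known arguments in the form stated here.

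First I would deal with the low-frequency piece, where $|\xi|\leq\varepsilon^*$. On this set the symbol
$$\frac{e^{-t/2}\sinh\bigl(t\sqrt{1/4-|\xi|^2}\bigr)}{\sqrt{1/4-|\xi|^2}}$$
is smooth, and a Taylor expansion $\sqrt{1/4-|\xi|^2}=1/2-|\xi|^2+O(|\xi|^4)$ shows that the leading factor is $e^{-t|\xi|^2}$, i.e.\ the Gauss kernel, up to an exponentially decaying remainder. After writing $|\nabla|^{s_1}=|\nabla|^{s_2}\cdot|\nabla|^{s_1-s_2}$ and absorbing the $|\nabla|^{s_1-s_2}$ together with the $\partial_t^j$ factors into the low-frequency multiplier, the corresponding convolution kernel satisfies $L^1$-bounds of order $(1+t)^{-n/2(1/\rho-1/q)-(s_1-s_2)/2-j}$ when restricted to the $L^\rho\to L^q$ setting; this can be checked directly from pointwise heat-kernel estimates together with Young's convolution inequality (with a Hardy--Littlewood--Sobolev variant in the endpoint $\rho=q=\infty$).

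For the high-frequency piece, where $|\xi|\geq \varepsilon^*/2$, I would exploit the exponential factor $e^{-t/2}$ to absorb any polynomial growth in $t$: it suffices to bound the oscillatory multiplier
$$\frac{\sin\bigl(t\sqrt{|\xi|^2-1/4}\bigr)}{\sqrt{|\xi|^2-1/4}}$$
as a Fourier multiplier from $L^q$ to $L^q$, where the classical $L^p$-theory for the wave propagator of Miyachi--Peral--Seeger--Sogge--Stein yields a loss of $\beta_q=(n-1)|1/2-1/q|$ derivatives. The factor $1/\sqrt{|\xi|^2-1/4}$ gives back one derivative, while each time derivative $\partial_t^j$ consumes $j$ derivatives, producing the total loss of $\beta_q+j-1$. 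To pass from the resulting $L^q$ bound to an $L^\infty$ bound, I would invoke the Sobolev embedding $H^d_r\hookrightarrow L^\infty$ for $d>n/r$, which exactly gives the extra $|\nabla|^d$ in the second inequality.

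The main obstacle is the high-frequency estimate: one must justify the sharp $L^q$ bound with the Brenner-type loss $\beta_q$ for the wave propagator, which is a non-trivial ingredient relying on stationary phase and Littlewood--Paley decomposition. However, once this classical estimate is invoked, the exponential damping $e^{-t/2}$ and the elementary observation that $\sqrt{|\xi|^2-1/4}\sim\langle\xi\rangle$ for $|\xi|\geq\varepsilon^*/2$ make the remaining bookkeeping routine. A careful patching of the low-frequency and high-frequency bounds then yields the statement of the lemma.
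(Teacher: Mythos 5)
Your proposal is correct and follows essentially the same route as the paper: the paper's proof simply splits into the low- and high-frequency zones and quotes Propositions 2.4 and 2.5 of \cite{Ikeda2019} (diffusion-type decay at low frequencies, exponentially damped wave-propagator bounds with the $\beta_q$ derivative loss at high frequencies), then uses the Sobolev embedding $H^d_r\hookrightarrow L^\infty$, $d>n/r$, for the $L^\infty$ bound — exactly the ingredients you sketch, only unpacked rather than cited. The one small inaccuracy is your appeal to a ``Hardy--Littlewood--Sobolev variant'' at the endpoint $\rho=q=\infty$: Young's inequality with the Gaussian-type low-frequency kernel already covers all endpoints, and HLS is not available there anyway.
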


\begin{proof}
   Due to the similarity in the proof, we present only the proof of the case $j=0$. Specifically, using Proposition 2.4 in \cite{Ikeda2019}, we gain
\begin{align*}
    &\||\nabla|^{s_1} \chi_{\rm L}(|\nabla|) \mathcal{K}(t,x) \ast_x \varphi(x)\|_{L^q} \\
    &\quad\lesssim (1+t)^{-\frac{n}{2}(\frac{1}{\rho}-\frac{1}{q})-\frac{s_1-s_2}{2}} \||\nabla|^{s_2} \chi_{\rm L}(|\nabla|) \varphi\|_{L^\rho}, \text{ for all } 1 \leq \rho \leq q \leq \infty.
\end{align*}
    In addition, from Proposition 2.5 in \cite{Ikeda2019}, we obtain 
    \begin{align*}
         \| |\nabla|^{s_1} \chi_{\rm H}(|\nabla|) \mathcal{K}(t,x)\ast_x \varphi(x)\|_{L^q} \lesssim e^{-ct} \||\nabla|^{s_1} \chi_{\rm H}(|\nabla|) \varphi\|_{H^{\beta_q-1}_q} \text{ for } 1 < q < \infty.
    \end{align*}
    Moreover, we can see that
    \begin{align}
        &\||\nabla|^{s_1} \chi_{\rm H}(|\nabla|) \mathcal{K}(t,x) \ast_x \varphi(x)\|_{L^{\infty}} \notag\\
        &\quad\lesssim \||\nabla|^{s_1} \langle \nabla \rangle^{d} \chi_{\rm H}(|\nabla|) \mathcal{K}(t,x) \ast_x\varphi(x)\|_{L^r} \lesssim e^{-ct} \||\nabla|^{s_1+d} \chi_{\rm H}(|\nabla|) \varphi\|_{H^{\beta_r-1}_r}, \label{Esti100}
    \end{align}
    for all $1< r < \infty$ and $d > n/r$.
    These complete the proof of case $j=0$.
\end{proof}

Under the assumptions of Theorem \ref{Theorem1},  we define the following function spaces for $T>0$
\begin{align*}
    X(T) :=  L^{\infty}([0, T], H^2 \cap L^{\alpha} \cap L^{\infty}),
\end{align*}
with the norm
\begin{align*}
    &\|\varphi\|_{X(T)} \\
    &\,\,:= \sup _{t \in [0,T]} \bigg\{(1+t)^{\frac{n}{2}(1-\frac{1}{\alpha})}\|\varphi(t,\cdot)\|_{L^{\alpha}} + (1+t)^{\frac{n}{4}+1} \| \varphi(t,\cdot)\|_{\dot{H}^2} + (1+t)^{\frac{n}{2}} \|\varphi(t,\cdot)\|_{L^\infty} \bigg\}
\end{align*}
and 
\begin{align*}
    Y(T) := L^{\infty}([0,T], L^2),
\end{align*}
with the norm
\begin{align*}
    \|\varphi\|_{Y(T)} := \sup_{t \in [0,T]
    }\big\{\|\varphi(t,\cdot)\|_{L^2} \big\}.
\end{align*}
Moreover, we denote that 
\begin{align*}
    X(T, M) := \{\varphi \in X(T): \|\varphi\|_{X(T)} \leq M \},
\end{align*}
for all $M > 0$. We now proceed to the following important property.
\vspace{0.2cm}

\begin{lemma}\label{lemma1.2}
    $X(T, M)$  is a closed subset of $Y(T)$ with respect to the metric $Y(T)$. 
\end{lemma}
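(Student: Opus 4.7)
The plan is to verify sequential closedness directly: given any sequence $\{\varphi_k\}_{k \in \mathbb{N}} \subset X(T, M)$ with $\varphi_k \to \varphi$ in the metric of $Y(T)$, I will show that $\varphi$ inherits the bound $\|\varphi\|_{X(T)} \leq M$, and hence $\varphi \in X(T, M)$.

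First I would unpack the convergence in $Y(T)$: it means $\sup_{t \in [0,T]} \|\varphi_k(t,\cdot) - \varphi(t,\cdot)\|_{L^2} \to 0$, so for every fixed $t \in [0,T]$, $\varphi_k(t,\cdot) \to \varphi(t,\cdot)$ in $L^2(\mathbb{R}^n)$. Passing to a subsequence (depending on $t$), one can arrange pointwise convergence $\varphi_{k_j}(t,x) \to \varphi(t,x)$ for a.e. $x \in \mathbb{R}^n$. The uniform bound $\|\varphi_k\|_{X(T)} \leq M$ provides, at this $t$, the three pointwise-in-time estimates
$$
\|\varphi_{k_j}(t,\cdot)\|_{L^\alpha} \leq M(1+t)^{-\frac{n}{2}(1 - 1/\alpha)}, \qquad \|\varphi_{k_j}(t,\cdot)\|_{L^\infty} \leq M(1+t)^{-n/2}, \qquad \|\varphi_{k_j}(t,\cdot)\|_{\dot H^2} \leq M(1+t)^{-n/4 - 1}.
$$

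The $L^\alpha$ and $L^\infty$ bounds on $\varphi(t,\cdot)$ will then follow immediately from Fatou's lemma applied to the a.e. convergent subsequence. For the $\dot H^2$ estimate I would exploit the Hilbert-space structure: the $\dot H^2$-bounded subsequence admits a further weakly convergent subsequence in $\dot H^2$, whose weak limit must coincide with $\varphi(t,\cdot)$ (both agree with the $L^2$-limit in the sense of distributions), and weak lower semicontinuity of the Hilbertian norm transfers the bound to $\varphi(t,\cdot)$. Taking the supremum over $t \in [0,T]$ with the corresponding time-weights then yields $\|\varphi\|_{X(T)} \leq M$.

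The only mildly delicate point I anticipate is the $L^\infty$ component, since $L^\infty$ lacks a useful weak sequential compactness (being only the dual of $L^1$), which is why I prefer to argue through Fatou's lemma rather than weak-$*$ compactness there; the $\dot H^2$ part, conversely, is best handled by Hilbert weak compactness since the a.e. pointwise bound by itself would not control second derivatives. Apart from these minor technicalities, the argument is a routine combination of Fatou, weak lower semicontinuity, and uniqueness of limits, with no substantive obstacle.
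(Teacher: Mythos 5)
Your argument is correct, but it follows a genuinely different route from the paper. The paper identifies $X(T,M)$ inside the Bochner space $L^\infty([0,T],H^2\cap L^\infty\cap L^\alpha)=\left(L^1([0,T],H^{-2}+L^1+L^{\alpha'})\right)^*$ and applies the Banach--Alaoglu theorem: a sequence in $X(T,M)$ converging in $Y(T)$ has a weak-$*$ convergent subsequence, the weak-$*$ limit obeys $\|\psi\|_{X(T)}\le\liminf\|\varphi_{j(k)}\|_{X(T)}\le M$, and uniqueness of distributional limits identifies $\psi$ with the $Y(T)$-limit. You instead work pointwise in $t$: strong $L^2$ convergence at each fixed $t$ gives an a.e. convergent subsequence, Fatou (for $L^\alpha$, with $\alpha\le 2<\infty$) and the passage of uniform a.e. bounds to a.e. limits (for $L^\infty$ --- note this is not literally Fatou, though the fix is immediate) transfer the weighted $L^\alpha$ and $L^\infty$ bounds, and for $\dot H^2$ you extract a weakly convergent subsequence of $\Delta\varphi_{k_j}(t,\cdot)$ in $L^2$, identify its limit with $\Delta\varphi(t,\cdot)$ through the strong $L^2$ convergence, and use weak lower semicontinuity. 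This buys you a more elementary proof: you avoid the duality identification of the vector-valued $L^\infty$ space and the separability of its predual, which is the only nontrivial input in the paper's argument, and you obtain the three decay bounds at every $t$ directly. What the paper's approach buys is that all three norms are handled in one stroke and the limit is produced at once as an element of the Bochner space $X(T)$; in your version you should add the (routine) remark that $\varphi$, already known to lie in $Y(T)=L^\infty([0,T],L^2)$, belongs to $L^\infty([0,T],H^2\cap L^\alpha\cap L^\infty)$ once the pointwise-in-$t$ bounds are established --- this measurability point is at the same level of detail that the paper itself glosses over. With that remark and the small rewording of the $L^\infty$ step, your proof is complete.
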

\begin{proof}
Firstly, we can see that $X(T, M) \subset Y(T)$ by interpolation. Therefore, it suffices to show that
if a sequence in $X(T, M)$ converging in $Y(T)$, its limit will belong to $X(T, M)$. More specifically, we assume that $\{\varphi_j\}_{j=1}^{\infty} \subset X(T,M)$ and $\varphi_j \to \varphi \in Y(T)$ as $j \to \infty$. One has the following relation:
    \begin{align*}
        L^\infty\left([0,T], H^2 \cap L^{\infty} \cap L^\alpha \right) = \left(L^1([0,T], H^{-2} + L^1 + L^{\alpha'})\right)^*,
    \end{align*}
    where $\alpha' = \alpha/(\alpha-1)$. Due to the separability of $L^1([0,T], H^{-2} + L^1 + L^{\alpha'})$, we apply Banach-Alaoglu theorem (see Theorem 3.16 or Corollary 3.30 in \cite{Brezis2011}) to take a subsequence $\{\varphi_{j(k)}\}_{k=1}^\infty$ and $\psi \in L^\infty\left([0,T], H^2 \cap L^{\infty} \cap L^\alpha \right) $ such that
    \begin{align*}
        \varphi_{j(k)} \overset{*}{\rightharpoonup} \psi \quad\text{ as } \quad k \to \infty .
    \end{align*}
    In addition, we have 
    \begin{align*}
        \|\psi\|_{X(T)} \leq \liminf_{k\to \infty} \|\varphi_{j(k)}\|_{X(T)} \leq M, 
    \end{align*}
    that is, $\psi \in X(T, M)$.
    On the other hand, both $\{\varphi_j\}_{j=1}^\infty$
and $\{\varphi_{j(k)}\}_{k=1}^{\infty}$
converge in the space of the distribution $
\mathcal{D}'([0, T] \times \mathbb{R}^n)$, that is,
\begin{align*}
    \varphi_{j(k)} &\to \psi \in \mathcal{D}'([0, T] \times \mathbb{R}^n) \quad\text{ as }\quad k \to \infty,\\
    \varphi_{j} &\to \varphi \in \mathcal{D}'([0, T] \times \mathbb{R}^n) \quad\text{ as } \quad j \to \infty.
\end{align*}
As a result, the uniqueness of the limit of distribution implies $\psi \equiv \varphi, $ which shows $\varphi \in X(T,M)$. 
\end{proof}

Next, we consider the operator $\Phi$ on the space $X(T)$
\begin{align}\label{Mapping1}
   \Phi[u] := u^{\rm lin} + u^{\rm non}
\end{align}
and denote the quantity
\begin{align*}
    \mathcal{I}(\varepsilon_0) := \int_0^{\varepsilon_0} \frac{\mu(s)}{s} ds,
\end{align*}
for $\varepsilon_0 \in (0, 1]$. Due to the condition (\ref{Condition1.1.1}), we have $\mathcal{I}(\varepsilon_0) \to 0$ as $\varepsilon_0 \to 0^+$. Finally, we introduce an important tool from Harmonic Analysis.
\vspace{0.2cm}

\begin{proposition}[see Corollary 2.4 in \cite{Hajaiej2011}]  \label{fractionalGagliardoNirenberg}
Let $1<p,\,p_0,\,p_1<\infty$, $a >0$ and $\sigma\in [0,a)$. Then, we have the following fractional Gagliardo-Nirenberg inequality:
$$ \|u\|_{\dot{H}^{\sigma}_p} \lesssim \|u\|_{L^{p_0}}^{1-\omega(\sigma,a)}\, \|u\|_{\dot{H}^{a}_{p_1}}^{\omega(\sigma,a)}, $$
where $\omega(\sigma,a) :=\displaystyle\frac{\frac{1}{p_0}-\frac{1}{p}+\frac{\sigma}{n}}{\frac{1}{p_0}-\frac{1}{p_1}+\frac{a}{n}}$ and $\displaystyle\frac{\sigma}{a}\leq \omega(\sigma,a) \leq 1$.
\end{proposition}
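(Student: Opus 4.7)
The plan is to derive the fractional Gagliardo--Nirenberg inequality via a Littlewood--Paley decomposition coupled with Bernstein's inequality, since the conclusion is essentially an assertion about how the scaling exponents of the three norms balance across dyadic frequency scales. A preliminary remark is that the exponent $\omega(\theta,a)$ stated in the proposition is forced by dimensional scaling: applying both sides to $u_\lambda(x) := u(\lambda x)$ and equating the powers of $\lambda$ yields precisely the displayed formula for $\omega$. The side conditions $\theta/a \leq \omega \leq 1$ will then ensure that the exponents arising along the dyadic scale have the correct sign, so that the relevant geometric series converge at the intended endpoint.

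Concretely, I would introduce a standard Littlewood--Paley partition of unity $\{\varphi_k\}_{k\in\Z}$ on the Fourier side and write $u=\sum_{k\in\Z}\Delta_k u$, where $\widehat{\Delta_k u}(\xi)=\varphi_k(\xi)\hat u(\xi)$ is supported on $\{|\xi|\sim 2^k\}$. For any cutoff level $N\in\Z$ I would split $u=u_{\mathrm{low}}+u_{\mathrm{high}}$ with $u_{\mathrm{low}}=\sum_{k\leq N}\Delta_k u$. On the low-frequency piece, Bernstein's inequality provides both the gain from the fractional derivative and the Lebesgue exponent shift from $p_0$ to $p$, producing a bound of the form
$$\|u_{\mathrm{low}}\|_{\dot{H}^\theta_p}\lesssim 2^{N(\theta+n/p_0-n/p)}\|u\|_{L^{p_0}}.$$
On the high-frequency piece, factoring $|\nabla|^\theta=|\nabla|^{\theta-a}\cdot|\nabla|^a$ and again applying Bernstein to move the Lebesgue exponent from $p_1$ to $p$ yields
$$\|u_{\mathrm{high}}\|_{\dot{H}^\theta_p}\lesssim 2^{-N(a-\theta-n/p_1+n/p)}\|u\|_{\dot{H}^a_{p_1}}.$$
The hypotheses on $\omega(\theta,a)$ guarantee that both exponents of $N$ are strictly positive, so optimizing $N$ to equalize the two contributions yields the stated inequality with exactly the interpolation weight $\omega(\theta,a)$.

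The main obstacle I expect is the validity of the Bernstein-type estimate when one of the Lebesgue exponents $p_0,p_1$ is \emph{larger} than $p$ rather than smaller, since in that case the naive frequency-localized embedding $\|\Delta_k u\|_{L^p}\lesssim 2^{kn(1/q-1/p)}\|\Delta_k u\|_{L^q}$ does not apply, and one must instead rely on boundedness of the Littlewood--Paley projectors on $L^p$ (a consequence of the Mikhlin--H\"ormander multiplier theorem, which is exactly where the restriction $1<p,p_0,p_1<\infty$ enters) and re-express the bounds in terms of the Littlewood--Paley square function. A cleaner route that bypasses this case analysis is to appeal to complex interpolation between $L^{p_0}$ and $\dot{H}^a_{p_1}$, identify the resulting interpolation space with a Triebel--Lizorkin space, and then embed it into $\dot{H}^\theta_p$; this is essentially the path followed in the cited reference \cite{Hajaiej2011}, and I would invoke it in case the direct Littlewood--Paley argument becomes unwieldy at the endpoints of the admissible range for $\theta$.
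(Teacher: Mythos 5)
The paper does not actually prove this proposition: it is imported verbatim from Corollary 2.4 of \cite{Hajaiej2011}, so there is no internal argument to compare yours against, and simply invoking that reference (as you do in your last sentence) already matches what the paper does. Judged as a standalone proof, your Littlewood--Paley/Bernstein sketch is the standard argument and works cleanly in the generic regime, but two points are genuine gaps rather than technicalities. First, your claim that the hypotheses force both exponents of $N$ to be \emph{strictly} positive is not accurate: the stated constraints only give $\theta/a\leq\omega\leq 1$, and at the endpoints (e.g.\ $\omega=1$, i.e.\ $a-\theta=n(\frac{1}{p_1}-\frac{1}{p})$, which is the Sobolev-embedding case, or a vanishing numerator $\frac{1}{p_0}-\frac{1}{p}+\frac{\theta}{n}=0$) one of the dyadic exponents is zero, the corresponding geometric series does not converge, and the optimization-in-$N$ argument must be replaced by a separate (embedding-type) argument.

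Second, the problematic case you flag, $p_0>p$ or $p_1>p$, is not excluded by the hypotheses ($p$ need not dominate both indices), and the patch you propose there does not work as stated: boundedness of the projectors $\Delta_k$ on $L^p$ gives nothing in the direction of decreasing integrability, and there is no Bernstein-type bound $\|\Delta_k u\|_{L^p}\lesssim 2^{kn(\frac{1}{q}-\frac{1}{p})}\|\Delta_k u\|_{L^q}$ for $q>p$, since a band-limited function can be spread over arbitrarily many well-separated bumps, making $\|\Delta_k u\|_{L^p}/\|\Delta_k u\|_{L^q}$ arbitrarily large. In that regime the term-by-term frequency argument genuinely fails, and one needs the different mechanism of a pointwise H\"older inequality across the Littlewood--Paley pieces combined with interpolation/embedding in the Triebel--Lizorkin scale --- precisely the route of \cite{Hajaiej2011}. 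So your ``fallback'' is in fact the only complete argument covering the full stated range of exponents; as a referee's summary, either carry out that interpolation argument or, like the paper, cite the reference and drop the claim that the direct dyadic argument covers all admissible $p,p_0,p_1,\theta$.
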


\subsection{Proof of Theorem \ref{Theorem1}} Firstly, we will prove the following results.
\vspace{0.2cm}

\begin{lemma}\label{lemma1.3}
    Under the assumptions of Theorem \ref{Theorem1}, the following estimates hold for all $u \in X(T,\varepsilon_0)$ and $T >0, \,\, \varepsilon_0 \in (0, 1]$:
    \begin{align*}
    \|\mathcal{N}(u(\tau,\cdot))\|_{L^{\gamma}} &\lesssim \mu\left(\varepsilon_0 (1+\tau)^{-\frac{n}{2}}\right) (1+\tau)^{-\frac{n}{2}(1+\frac{2}{n}-\frac{1}{\gamma})}  \|u\|_{X(T)}^{1+\frac{2}{n}} \quad\text{ for } \gamma \geq 1,\\
    \|\nabla \mathcal{N}(u(\tau,\cdot))\|_{L^{\theta}} &\lesssim \mu\left(\varepsilon_0 (1+\tau)^{-\frac{n}{2}}\right) (1+\tau)^{-\frac{n}{2}(1+\frac{2}{n}-\frac{1}{\theta})-\frac{1}{2}}  \|u\|_{X(T)}^{1+\frac{2}{n}} \quad \text{ for } \theta \in \left[2, \frac{2n}{[n-2]^+}\right).
    \end{align*}
\end{lemma}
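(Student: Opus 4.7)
Both bounds follow from two ingredients. First, the monotonicity of $\mu$ gives the pointwise majorization
\[
|u(\tau,x)|^{1+\frac{2}{n}}\mu(|u(\tau,x)|) \leq \mu(\|u(\tau,\cdot)\|_{L^\infty})\,|u(\tau,x)|^{1+\frac{2}{n}},
\]
and since $u\in X(T,\varepsilon_0)$ yields $\|u(\tau,\cdot)\|_{L^\infty}\leq \varepsilon_0(1+\tau)^{-n/2}$, the modulus factor $\mu(\varepsilon_0(1+\tau)^{-n/2})$ appears for free. Second, Lebesgue interpolation between the endpoints $L^\alpha$ and $L^\infty$ built into the $X(T)$-norm produces
\[
\|u(\tau,\cdot)\|_{L^p} \lesssim (1+\tau)^{-\frac{n}{2}(1-\frac{1}{p})}\|u\|_{X(T)}\quad\text{for every }p\in[\alpha,\infty].
\]

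\textbf{First estimate.} Take $L^\gamma$-norms in the majorization. Since $\gamma\geq 1$ and $\alpha\leq 1+2/n$, the exponent $(1+2/n)\gamma$ lies in $[\alpha,\infty]$, so the interpolation above applies to $\|u(\tau,\cdot)\|_{L^{(1+2/n)\gamma}}^{1+2/n}$, and the powers of $(1+\tau)$ collapse algebraically to $(1+\tau)^{-\frac{n}{2}(1+\frac{2}{n}-\frac{1}{\gamma})}$.

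\textbf{Second estimate.} By the chain rule, $\nabla\mathcal{N}(u)=\bigl[(1+\frac{2}{n})|u|^{2/n}\mu(|u|)+|u|^{1+2/n}\mu'(|u|)\,\mathrm{sgn}(u)\bigr]\nabla u$; hypothesis \eqref{Condition1.1.2} absorbs the $\mu'$-term into the $\mu$-term, yielding the pointwise bound
\[
|\nabla\mathcal{N}(u)| \lesssim |u|^{2/n}\mu(|u|)\,|\nabla u| \leq \mu(\|u\|_{L^\infty})\,|u|^{2/n}\,|\nabla u|.
\]
H\"older's inequality then splits the remaining factor as
\[
\| |u|^{2/n}\nabla u\|_{L^\theta} \leq \|u\|_{L^{2r/n}}^{2/n}\|\nabla u\|_{L^s},\qquad \tfrac{1}{r}+\tfrac{1}{s}=\tfrac{1}{\theta},
\]
with $r$ chosen so that $2r/n\geq \alpha$ (so the first factor inherits the $X(T)$-interpolation estimate) and $s$ admissible for Proposition \ref{fractionalGagliardoNirenberg} taken with $p_0=\alpha$, $p_1=2$, $a=2$, $\theta_{\rm GN}=1$; the latter delivers $\|\nabla u(\tau,\cdot)\|_{L^s}\lesssim (1+\tau)^{-\frac{n}{2}(1-\frac{1}{s})-\frac{1}{2}}\|u\|_{X(T)}$. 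The scale invariance of both H\"older and Gagliardo--Nirenberg then forces the product of decay exponents to reconstruct $-\frac{n}{2}(1+\frac{2}{n}-\frac{1}{\theta})-\frac{1}{2}$ regardless of the admissible choice of $(r,s)$.

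\textbf{Main obstacle.} The one non-routine step is to pick $(r,s)$ satisfying simultaneously $2r/n\geq\alpha$ and the admissibility condition $\omega\in[1/2,1]$ of Proposition \ref{fractionalGagliardoNirenberg}. For $n=3,4$ and $\theta$ near the lower endpoint $\alpha$, the most natural choice $r=\infty$, $s=\theta$ violates $\omega\geq 1/2$, and one must instead take $r$ finite (e.g.\ $r=n\alpha/2$), which lifts $s$ into the Gagliardo--Nirenberg admissible range $[b_{\min},2n/[n-2]^+)$ while preserving $2r/n=\alpha$. Verifying that an admissible $(r,s)$ always exists for every $\theta\in[\alpha,2n/[n-2]^+)$ and every $1\leq n\leq 4$ is a routine but dimension-dependent case check; once performed, both estimates follow by combining the three ingredients above.
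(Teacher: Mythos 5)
Your proposal is correct and, in outline, coincides with the paper's proof: the first estimate is obtained exactly as in the paper (monotonicity of $\mu$ plus interpolation of $L^{\gamma(1+2/n)}$ between the $L^\alpha$ and $L^\infty$ components of the $X(T)$-norm), and the second starts from the same pointwise bound $|\nabla\mathcal{N}(u)|\lesssim |u|^{2/n}\mu(|u|)|\nabla u|$ furnished by \eqref{Condition1.1.2}. The genuine difference is in how you treat $\||u|^{2/n}\nabla u\|_{L^\theta}$: the paper fixes the split $\|u\|_{L^\infty}^{2/n}\|\nabla u\|_{L^\theta}$ and applies Proposition \ref{fractionalGagliardoNirenberg} with $p=\theta$, asserting only $\omega_1\in[0,1]$, whereas you keep a general H\"older pair $(r,s)$ with $1/r+1/s=1/\theta$ precisely to enforce the admissibility constraint $\omega\geq \theta_{\rm GN}/a=1/2$ of the proposition. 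This is a real improvement in rigor: for $n=3$, $\theta=\alpha=5/3$ the paper's choice gives $\omega_1=10/23<1/2$ (and $\omega_1=3/8$ for $n=4$, $\theta=\alpha=3/2$), so the lemma as stated for all $\theta\in[\alpha,2n/[n-2]^+)$ is not covered by the paper's direct application of the GN inequality near the lower endpoint, although the values actually used later ($\theta=2$ and $\theta=r>2$) are unproblematic; your flexible split, whose decay exponents indeed only depend on $1/r+1/s$, repairs this. One caveat: your sample choice $r=n\alpha/2$ does not by itself cover the whole window where $r=\infty$ fails when $n=3$ (for $\theta$ slightly below $20/11$ it forces $s>6$, violating $\omega\leq 1$), so the deferred case check must allow intermediate values of $r$; such admissible pairs do exist for every $\theta$ in the stated range and every $1\leq n\leq 4$, so your argument closes once that routine verification is written out.
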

\begin{proof}
    Firstly, applying interpolation to have
    \begin{align*}
        \|\mathcal{N}(u(\tau,\cdot))\|_{L^{\gamma}} &\leq \mu(\|u(\tau,\cdot)\|_{L^\infty})\||u(\tau,\cdot)|^{1+\frac{2}{n}}\|_{L^\gamma}  = \mu(\|u(\tau,\cdot)\|_{L^\infty})\|u(\tau,\cdot)\|_{L^{\gamma(1+\frac{2}{n})}}^{1+\frac{2}{n}} \\
        &\lesssim \mu\left((1+\tau)^{-\frac{n}{2}} \|u\|_{X(T)}\right) \|u(\tau,\cdot)\|_{L^{\alpha}}^{\frac{\alpha}{\gamma}} \|u(\tau,\cdot)\|_{L^{\infty}}^{1+\frac{2}{n}-\frac{\alpha}{\gamma}}  \\
        &\lesssim \mu\left(\varepsilon_0 (1+\tau)^{-\frac{n}{2}}\right)(1+\tau)^{-\frac{n}{2}(1+\frac{2}{n}-\frac{1}{\gamma})}  \|u\|_{X(T)}^{1+\frac{2}{n}}.
    \end{align*}
   Moreover, using the condition (\ref{Condition1.1.2}) yields
   \begin{align*}
       |\nabla \mathcal{N}(u(\tau, \cdot))| \lesssim \mu(|u(\tau,\cdot)|) |u(\tau,\cdot)|^{\frac{2}{n}} |\nabla u(\tau,\cdot)|.
   \end{align*}
   Therefore, we obtain the following relation:
   \begin{align*}
       \|\nabla \mathcal{N}(u(\tau,\cdot))\|_{L^\theta} \lesssim \|\mu(|u(\tau,\cdot)|)\|_{L^\infty} \|u(\tau, \cdot)\|_{L^\infty}^{\frac{2}{n}} \|\nabla u(\tau,\cdot)\|_{L^{\theta}}.
   \end{align*}
   Thanks to Proposition \ref{fractionalGagliardoNirenberg}, we arrive at
   \begin{align*}
       \|\nabla u(\tau,\cdot)\|_{L^\theta} &\lesssim \|u(\tau,\cdot)\|_{L^{\alpha}}^{1-\omega_1} \|u(\tau,\cdot)\|_{\dot{H}^2}^{\omega_1}\\
       &\lesssim (1+\tau)^{-\frac{n}{2}(1-\frac{1}{\theta})-\frac{1}{2}} \|u\|_{X(T)},
   \end{align*}
   where 
   \begin{align*}
       \omega_1 := \displaystyle\frac{\displaystyle\frac{1}{\alpha}-\frac{1}{\theta}+\frac{1}{n}}{\displaystyle\frac{1}{\alpha}-\frac{1}{2}+\frac{2}{n}} \in \left[\frac{1}{2}, 1\right]  \quad\text{ for all } \quad \theta \in \left[2, \frac{2n}{[n-2]^+}\right).
   \end{align*}
   From this, we get 
   \begin{align*}
       \|\nabla \mathcal{N}(u(\tau,\cdot))\|_{L^\theta} \lesssim \mu\left(\varepsilon_0 (1+\tau)^{-\frac{n}{2}}\right) (1+\tau)^{-\frac{n}{2}(1+\frac{2}{n}-\frac{1}{\theta})-\frac{1}{2}} \|u\|_{X(T)}^{1+\frac{2}{n}}.
   \end{align*}
   Thus, we completed the proof of Lemma \ref{lemma1.3}.
\end{proof}
\begin{proposition}\label{Pro1.1}
     Under the assumptions of Theorem \ref{Theorem1}, the following estimate holds for all $u \in X(T, \varepsilon_0),\, \varepsilon_0 \in (0, 1]$:
     \begin{align*}
         \|u^{\rm non}\|_{X(T)} \lesssim \mathcal{I}(\varepsilon_0)\|u\|_{X(T)}^{1+\frac{2}{n}}.
     \end{align*}
   
\end{proposition}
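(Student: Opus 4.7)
The $X(T)$ norm consists of three time-weighted pieces ($L^\alpha$, $\dot H^2$, $L^\infty$ with weights $(1+t)^{\frac{n}{2}(1-1/\alpha)}$, $(1+t)^{n/4+1}$, $(1+t)^{n/2}$), so the plan is to estimate $u^{\rm non}(t,\cdot)$ in each norm separately by plugging Lemma \ref{lemma1.3} into Lemma \ref{LinearEstimates} and reducing the resulting time integral to $\mathcal{I}(\varepsilon_0)$ via the substitution $r=\varepsilon_0(1+\tau)^{-n/2}$, which turns $\int \mu(\varepsilon_0(1+\tau)^{-n/2})(1+\tau)^{-1}\,d\tau$ into $\tfrac{2}{n}\int \mu(r)/r\,dr$. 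A recurring elementary fact I will use without further comment is $\mu(\varepsilon_0)\lesssim \mathcal{I}(\varepsilon_0)$, which follows from concavity of $\mu$ and $\mu(0)=0$: for $r\leq \varepsilon_0$ one has $\mu(r)\geq (r/\varepsilon_0)\mu(\varepsilon_0)$, hence $\mathcal{I}(\varepsilon_0)\geq \mu(\varepsilon_0)$.

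For the $L^\alpha$ piece, apply the first estimate of Lemma \ref{LinearEstimates} with $\rho=1$, $q=\alpha$, $s_1=s_2=0$; the kernel factor $(1+t-\tau)^{-\frac{n}{2}(1-1/\alpha)}$ has exponent in $(-1,0)$ for every $n\in\{1,2,3,4\}$, so it is globally integrable on $[0,t]$. Combine it with $\|\mathcal{N}(u(\tau,\cdot))\|_{L^1}\lesssim \mu(\varepsilon_0(1+\tau)^{-n/2})(1+\tau)^{-1}\|u\|_{X(T)}^{1+2/n}$ from Lemma \ref{lemma1.3} and split $\int_0^t=\int_0^{t/2}+\int_{t/2}^t$. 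On $[0,t/2]$ the kernel factor is $\sim (1+t)^{-\frac{n}{2}(1-1/\alpha)}$ and pulls out, leaving $\int_0^{t/2}\mu(\varepsilon_0(1+\tau)^{-n/2})(1+\tau)^{-1}\,d\tau\lesssim \mathcal{I}(\varepsilon_0)$ by the substitution above; on $[t/2,t]$ we have $(1+\tau)\sim (1+t)$ and $\mu(\varepsilon_0(1+\tau)^{-n/2})\lesssim \mathcal{I}(\varepsilon_0)$, and the $\tau$-integral of the kernel contributes the power of $(1+t)$ that cancels the weight after multiplication.

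For the $\dot H^2$ and $L^\infty$ pieces the natural choice $\rho=1$, $s_2=0$ produces a kernel singular at $\tau=t$ (for instance $(1+t-\tau)^{-n/4-1}$ in the $\dot H^2$ case), so the same splitting is combined with a parameter switch on the high-time interval. On $[0,t/2]$ retain $\rho=1$, $s_2=0$; on $[t/2,t]$ take instead $\rho=2$ with $s_2=1$, so that the kernel becomes $(1+t-\tau)^{-1/2}$ for $\dot H^2$ and $(1+t-\tau)^{-n/4+1/2}$ for $L^\infty$, both integrable at $\tau=t$ in all four dimensions, and pair it with $\|\nabla \mathcal{N}(u(\tau,\cdot))\|_{L^2}$ from Lemma \ref{lemma1.3} at $\theta=2$ (one checks $2\in[\alpha,2n/[n-2]^+)$ exactly when $1\leq n\leq 4$). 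Power counting then collapses each high-time contribution to $\mu(\varepsilon_0(1+t)^{-n/2})(1+t)^{-w_P}\|u\|_{X(T)}^{1+2/n}$ with $w_P$ the corresponding weight, which after multiplication by $(1+t)^{w_P}$ is absorbed into $\mathcal{I}(\varepsilon_0)$. The exponentially decaying high-frequency tails of Lemma \ref{LinearEstimates} are handled uniformly by bounding $\mathcal{N}(u(\tau,\cdot))$ in the relevant (possibly negative-regularity) Sobolev space via Lemma \ref{lemma1.3} and using the integrability of $e^{-c(t-\tau)}$.

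The principal obstacle is the interval $[t/2,t]$: the dispersive estimate at $\rho=1$ has the sharpest spatial decay but is singular at $\tau=t$, so one must spend a derivative on $\mathcal{N}(u)$ and reduce to $\rho=2$. Two conditions must then hold simultaneously—(i) $\theta=2$ must lie in the fractional Gagliardo--Nirenberg window $[\alpha,2n/[n-2]^+)$ used in Lemma \ref{lemma1.3}, and (ii) the new singular exponent $-\tfrac{n}{2}(1/2-1/q)-1/2$ must stay above $-1$—and these are precisely the conditions isolating $1\leq n\leq 4$. The only other subtlety is absorbing the boundary factor $\mu(\varepsilon_0(1+t)^{-n/2})$ coming from $[t/2,t]$ into $\mathcal{I}(\varepsilon_0)$, which is handled by the concavity inequality $\mu(\varepsilon_0)\leq\mathcal{I}(\varepsilon_0)$ recorded above.
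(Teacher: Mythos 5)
Your treatment of the $L^\alpha$ and $\dot H^2$ components is sound and essentially matches the paper (for $\dot H^2$: $\rho=1$, $(s_1,s_2)=(2,0)$ on $[0,t/2]$ and $\rho=q=2$, $(s_1,s_2)=(2,1)$ on $[t/2,t]$; your $L^\alpha$ variant keeping $\rho=1$ on $[t/2,t]$ also closes, and your concavity observation $\mu(\varepsilon_0)\le\mathcal{I}(\varepsilon_0)$ is a legitimate substitute for the paper's monotonicity trick $\mu(\varepsilon_0(1+\tau)^{-n/2})\le\mu(\varepsilon_0(1+t-\tau)^{-n/2})$). The gap is in the $L^\infty$ component on $[t/2,t]$: you invoke Lemma \ref{LinearEstimates} with $\rho=2$, $s_1=0$, $s_2=1$, but the lemma requires $s_1\ge s_2\ge 0$, and this is not a cosmetic restriction. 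The corresponding low-frequency estimate $\|\chi_{\rm L}(|\nabla|)\mathcal{K}(t)\ast\varphi\|_{L^\infty}\lesssim(1+t)^{-\frac n4+\frac12}\|\nabla\chi_{\rm L}(|\nabla|)\varphi\|_{L^2}$ amounts to controlling $L^\infty$ by one derivative in $L^2$ at low frequencies; by Cauchy--Schwarz in frequency this needs $\int_{|\xi|\le 1}|\xi|^{-2}\,d\xi<\infty$, so it holds only for $n\ge3$ and is false for $n=1,2$ (a low-frequency bump of width $\epsilon$ makes the ratio blow up like $\epsilon^{-n/2+ \,n/2 - \ldots}$, concretely $\epsilon^{-1/2}$ in 1D, and the heat-type smoothing at bounded frequencies cannot compensate). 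So your high-time $L^\infty$ bound is unjustified as written and actually fails in dimensions $1,2$; it needs to be replaced, e.g.\ by the paper's choice $\rho=r\in\left(2,\frac{2n}{[n-2]^+}\right)$, $q=\infty$, $(s_1,s_2)=(0,0)$, paired with $\|\mathcal{N}(u(\tau,\cdot))\|_{L^r\cap H^{d+\beta_r-1}_r}$ and the observation $d+\beta_r<2$ so that $H^{d+\beta_r-1}_r$ is controlled by $\|\mathcal{N}(u)\|_{L^r}+\|\nabla\mathcal{N}(u)\|_{L^r}$ via Lemma \ref{lemma1.3} with $\theta=r$ (or, for the low-frequency piece alone, simply $\rho=2$, $s_1=s_2=0$).

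Relatedly, your accounting of where $1\le n\le 4$ enters is off: the condition $2\in\left[\alpha,\frac{2n}{[n-2]^+}\right)$ holds for every $n\ge1$, and the exponent $-\frac n2\left(\frac12-\frac1q\right)-\frac12$ in the $\dot H^2$ step is $-\frac12$ independently of $n$, so neither of your two ``isolating'' conditions restricts the dimension. The restriction really comes from the $L^\infty$ (and high-frequency) part that your argument glosses over: one needs $\beta_\alpha<1$ and, above all, $d+\beta_r=\frac nr+\delta+(n-1)\left(\frac12-\frac1r\right)<2$ for some admissible $r>2$, which is exactly where $n\le4$ and the range of $r$ are used. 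Repairing the $L^\infty$ step along the paper's lines fixes both issues; as it stands, though, that step is a genuine gap.
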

     \begin{proof}
         Firstly, using Lemma \ref{lemma1.3} and
         Lemma \ref{LinearEstimates} with $ j= 0, \,\rho = 1, q = 2$, $(s_1, s_2) =(2,0)$ for $\tau \in [0, t/2]$, and $\rho=q =2, (s_1,s_2) =(2,1)$ for $\tau \in (t/2, t]$ to derive
         \begin{align*}
             \| u^{\rm non}(t,\cdot)\|_{\dot{H}^2} &\lesssim \int_0^{t/2} (1+t-\tau)^{-\frac{n}{4}-1} \| \mathcal{N}(u(\tau,\cdot))\|_{L^1 \cap \dot{H}^1} d\tau\\
             &\qquad + \int_{t/2}^t (1+t-\tau)^{-\frac{1}{2}} \| \mathcal{N}(u(\tau,\cdot))\|_{ \dot{H}^1} d\tau\\
             &\lesssim \|u\|_{X(T)}^{1+\frac{2}{n}}\int_0^{t/2} (1+t-\tau)^{-\frac{n}{4}-1} (1+\tau)^{-1} \mu\left(\varepsilon_0 (1+\tau)^{-\frac{n}{2}}\right) d\tau\\
             &\qquad + \|u\|_{X(T)}^{1+\frac{2}{n}}\int_{t/2}^t (1+t-\tau)^{-\frac{1}{2}} (1+\tau)^{-\frac{n}{4}-\frac{3}{2}} \mu\left(\varepsilon_0 (1+\tau)^{-\frac{n}{2}}\right) d\tau\\
             &\lesssim  (1+t)^{-\frac{n}{4}-1}\|u\|_{X(T)}^{1+\frac{2}{n}} \int_0^{t/2}(1+\tau)^{-1} \mu\left(\varepsilon_0 (1+\tau)^{-\frac{n}{2}}\right) d\tau\\
             &\qquad +  (1+t)^{-\frac{n}{4}-1}\|u\|_{X(T)}^{1+\frac{2}{n}} \int_{t/2}^t (1+t-\tau)^{-1} \mu\left(\varepsilon_0 (1+t-\tau)^{-\frac{n}{2}}\right) d\tau,
         \end{align*}
where noting that $1+t \sim 1+\tau \geq 1+t-\tau$ for all $\tau \in [t/2, t]$. Using the change of variables $\theta_1 = \varepsilon_0 (1+\tau)^{-\frac{n}{2}}$ and $\theta_2 = \varepsilon_0(1+t-\tau)^{-\frac{n}{2}}$, we get
\begin{align}
    \int_0^{t/2} (1+\tau)^{-1} \mu\left(\varepsilon_0 (1+\tau)^{-\frac{n}{2}}\right) d\tau   +
    \int_{t/2}^{t} (1+t-\tau)^{-1} \mu\left(\varepsilon_0 (1+t-\tau)^{-\frac{n}{2}}\right) d\tau \lesssim \mathcal{I}(\varepsilon_0). \label{Impor_Re1}
\end{align}
Therefore, we can conclude that 
    \begin{align}
        \|u^{\rm non}(t,\cdot)\|_{\dot{H}^2} \lesssim \mathcal{I}(\varepsilon_0)(1+t)^{-\frac{n}{4}-1} \|u\|_{X(T)}^{1+\frac{2}{n}}. \label{Main.Es.1}
    \end{align}
 Next, thanks to again 
         Lemma \ref{LinearEstimates} for $j =0, \,\rho = 1, q = \infty$, $(s_1, s_2) = (0,0)$ for $\tau \in [0, t/2]$, and $\rho =r, q= \infty, \,(s_1, s_2) =(0,0)$ for $\tau \in (t/2, t]$ to obtain
    \begin{align*}
        \|u^{\rm non}(t,\cdot)\|_{L^\infty} &\lesssim \int_0^{t/2} (1+t-\tau)^{-\frac{n}{2}} \|\mathcal{N}(u(\tau,\cdot))\|_{L^1 \cap H_r^{d+\beta_r-1}} d\tau\\
        &\qquad +\int_{t/2}^t (1+t-\tau)^{-\frac{n}{2r}} \|\mathcal{N}(u(\tau,\cdot))\|_{L^r \cap H_r^{d+\beta_r-1}} d\tau,
    \end{align*}
    where $d := n/r +\delta$ with $\delta$ is a sufficiently small constant. 
Noting that the conditions $r \in \left(2, \displaystyle\frac{2n}{[n-2]^+}\right)$ and $1 \leq n \leq 4$ imply
\begin{align}
    0 < d + \beta_r = \frac{n}{r} +\delta + (n-1)\left(\frac{1}{2}-\frac{1}{r}\right) < 2,  \label{Re10}
\end{align}
   that is, 
   \begin{align*}
       \| \mathcal{N}(u(\tau,\cdot))\|_{H_r^{d+\beta_r-1}} &\lesssim \| \mathcal{N}(u(\tau,\cdot))\|_{H_r^1} \sim \|\mathcal{N}(u(\tau,\cdot))\|_{L^r} + \|\nabla \mathcal{N}(u(\tau,\cdot))\|_{L^r}\\ 
       &\lesssim (1+\tau)^{-\frac{n}{2}(1+\frac{2}{n}-\frac{1}{r})} \mu\left(\varepsilon_0 (1+\tau)^{-\frac{n}{2}}\right) \|u\|_{X(T)}^{1+\frac{2}{n}},
   \end{align*}
   due to using again Lemma \ref{lemma1.3}. Combining this with the relation (\ref{Impor_Re1}) and $n < 2r$, we get
   \begin{align}
       \|u^{\rm non}(t,\cdot)\|_{L^\infty} &\lesssim \|u\|_{X(T)}^{1+\frac{2}{n}}\int_0^{t/2} (1+t-\tau)^{-\frac{n}{2}} (1+\tau)^{-1} \mu\left(\varepsilon_0(1+\tau)^{-\frac{n}{2}}\right) d\tau \notag\\
       &\quad+ \|u\|_{X(T)}^{1+\frac{2}{n}} \int_{t/2}^t (1+t-\tau)^{-\frac{n}{2r}} (1+\tau)^{-\frac{n}{2}(1+\frac{2}{n}-\frac{1}{r})} \mu\left(\varepsilon_0 (1+\tau)^{-\frac{n}{2}}\right) d\tau \notag\\
       &\lesssim (1+t)^{-\frac{n}{2}} \|u\|_{X(T)}^{1+\frac{2}{n}} \int_0^{t/2} (1+\tau)^{-1} \mu\left(\varepsilon_0 (1+\tau)^{-\frac{n}{2}}\right) d\tau \notag\\
       &\quad + (1+t)^{-\frac{n}{2}} \|u\|_{X(T)}^{1+\frac{2}{n}} \int_{t/2}^t (1+t-\tau)^{-\frac{n}{2r}} (1+\tau)^{-1+\frac{n}{2r}}  \mu\left(\varepsilon_0 (1+\tau)^{-\frac{n}{2}}\right) d\tau \notag\\
       &\lesssim \mathcal{I}(\varepsilon_0) (1+t)^{-\frac{n}{2}} \|u\|_{X(T)}^{1+\frac{2}{n}}. \label{Main.Es.2}
   \end{align}
   Finally, we need to estimate $\|u^{\rm non}(\tau,\cdot)\|_{L^{\alpha}}$. We have $\beta_\alpha < 1$ for all $1 \leq n \leq 4$. Therefore, employing again Lemma \ref{lemma1.3} combined with Lemma \ref{LinearEstimates} with $j = 0, \,\rho = 1, q =\alpha, (s_1, s_2) = (0,0)$ for $\tau \in [0, t/2)$ and $\rho = q =\alpha, (s_1, s_2) = (0,0)$ for $\tau \in [t/2, t]$ we have
   \begin{align}
       \|u^{\rm non}(t,\cdot)\|_{L^\alpha} &\lesssim \int_0^{t/2} (1+t-\tau)^{-\frac{n}{2}(1-\frac{1}{\alpha})} \|\mathcal{N}(u(\tau,\cdot))\|_{L^1 \cap L^\alpha} d\tau  +\int_{t/2}^t \|\mathcal{N}(u(\tau,\cdot))\|_{L^{\alpha}} d\tau \notag\\
       &\lesssim \|u\|_{X(T)}^{1+\frac{2}{n}} \int_0^{t/2} (1+t-\tau)^{-\frac{n}{2}(1-\frac{1}{\alpha})} (1+\tau)^{-1} \mu\left(\varepsilon_0 (1+\tau)^{-\frac{n}{2}}\right) d\tau \notag\\
       &\quad+ \|u\|_{X(T)}^{1+\frac{2}{n}}\int_{t/2}^t (1+\tau)^{-\frac{n}{2}(1+\frac{2}{n}-\frac{1}{\alpha})} \mu\left(\varepsilon_0 (1+\tau)^{-\frac{n}{2}}\right) d\tau \notag\\
       &\lesssim \mathcal{I}(\varepsilon_0) (1+t)^{-\frac{n}{2}(1-\frac{1}{\alpha})} \|u\|_{X(T)}^{1+\frac{2}{n}}. \label{Main.Es.3}
   \end{align}
   The estimates (\ref{Main.Es.1})-(\ref{Main.Es.3}) complete the proof of Proposition \ref{Pro1.1}.
     \end{proof}

\begin{proposition}\label{Pro1.2}
    Under the assumptions of Theorem \ref{Theorem1}, the following estimate holds for all $u, v \in X(T, \varepsilon_0)$, $\varepsilon_0 \in (0, 1]$:
    \begin{align*}
        \|\Phi[u]-\Phi[v]\|_{Y(T)} \lesssim \mathcal{I}(\varepsilon_0) \|u-v\|_{Y(T)}\left(\|u\|_{X(T)}^{p-1} + \|v\|_{X(T)}^{p-1}\right).
    \end{align*}
\end{proposition}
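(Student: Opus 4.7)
The plan is to observe that in the difference $\Phi[u] - \Phi[v]$ the linear parts cancel by definition (\ref{Mapping1}), leaving only the Duhamel integral
$$(\Phi[u] - \Phi[v])(t, x) = \int_0^t \mathcal{K}(t-\tau, x) \ast_x \bigl[\mathcal{N}(u(\tau, x)) - \mathcal{N}(v(\tau, x))\bigr] d\tau.$$
Since the $Y(T)$ metric is only $L^\infty_t L^2_x$, I intend to estimate the propagator in the simplest possible $L^2 \to L^2$ fashion. Applying Lemma \ref{LinearEstimates} with $j=0$, $\rho=q=2$, $(s_1,s_2) = (0,0)$ gives $\|\mathcal{K}(t-\tau,\cdot) \ast_x \varphi\|_{L^2} \lesssim \|\varphi\|_{L^2}$ uniformly in $t-\tau$: the low-frequency polynomial factor is $(1+t-\tau)^{0} = 1$, while the high-frequency part is exponentially damped and the embedding $L^2 \hookrightarrow H^{-1}_2$ absorbs the negative-regularity norm.

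It then remains to control $\|\mathcal{N}(u(\tau,\cdot)) - \mathcal{N}(v(\tau,\cdot))\|_{L^2}$. The key pointwise inequality, obtained from the mean value theorem combined with the hypothesis (\ref{Condition1.1.2}), reads
$$|\mathcal{N}(u) - \mathcal{N}(v)| \lesssim |u-v|\bigl(|u|^{\frac{2}{n}}\mu(|u|) + |v|^{\frac{2}{n}}\mu(|v|)\bigr);$$
this follows because $|\mathcal{N}'(s)| \lesssim |s|^{2/n}\mu(|s|) + |s|^{1+2/n}|\mu'(|s|)| \lesssim |s|^{2/n}\mu(|s|)$ for $|s|\leq 1$, and the smallness $\varepsilon_0 \leq 1$ together with $u,v \in X(T,\varepsilon_0)$ ensures $\|u\|_{L^\infty}, \|v\|_{L^\infty} \leq 1$. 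Pairing by Hölder's inequality in the form $L^2 \cdot L^\infty$, then inserting the $X(T)$-decay $\|u(\tau,\cdot)\|_{L^\infty} \lesssim (1+\tau)^{-n/2}\|u\|_{X(T)}$ and using monotonicity of $\mu$, I obtain
$$\|\mathcal{N}(u(\tau,\cdot)) - \mathcal{N}(v(\tau,\cdot))\|_{L^2} \lesssim (1+\tau)^{-1}\mu\bigl(\varepsilon_0 (1+\tau)^{-n/2}\bigr)\|u-v\|_{Y(T)}\bigl(\|u\|_{X(T)}^{\frac{2}{n}} + \|v\|_{X(T)}^{\frac{2}{n}}\bigr).$$

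Combining the two steps and taking $\sup_{t \in [0,T]}$, the time integral reduces, via the change of variables $\theta = \varepsilon_0 (1+\tau)^{-n/2}$ already exploited in the proof of Proposition \ref{Pro1.1}, to a multiple of $\mathcal{I}(\varepsilon_0)$; the high-frequency contribution $\int_0^t e^{-c(t-\tau)} g(\tau)\, d\tau$ is likewise bounded by $\int_0^t g(\tau)\,d\tau \lesssim \mathcal{I}(\varepsilon_0)$. Recalling $p-1 = 2/n$, this yields the announced Lipschitz-type bound. The main obstacle is conceptual rather than computational: because $u-v$ is controlled only in $L^2$ and cannot be interpolated into stronger norms without losing the single-power Lipschitz structure, one is forced to pair it with an $L^\infty$ factor, which in turn dictates the $L^2 \to L^2$ linear estimate with no decay gain; it is precisely this exact-cancellation that lets the Dini integral reproduce the full factor $\mathcal{I}(\varepsilon_0)$, ensuring $\Phi$ will be a contraction on $X(T,\varepsilon_0)$ for small $\varepsilon_0$.
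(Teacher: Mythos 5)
Your proposal is correct, and it reaches the conclusion by a genuinely simpler route than the paper. The paper does not use the plain $L^2\to L^2$ bound: it applies Lemma \ref{LinearEstimates} with $\rho=m\in(1,2)$, $q=2$, pairs $|u-v|$ in $L^2$ against $|u|^{2/n}+|v|^{2/n}$ in an auxiliary $L^{\eta}$ (with $1/m=1/2+1/\eta$, $1/\eta=\delta\ll1$), estimates $\|u\|_{L^{\frac{2}{n}\eta}}$ by interpolation between $L^\alpha$ and $L^\infty$, and then splits the Duhamel integral at $t/2$ into $\mathcal{J}_1,\mathcal{J}_2$, using $1+\tau\geq 1+t-\tau$ on $[t/2,t]$ to convert both pieces into the Dini integral $\mathcal{I}(\varepsilon_0)$. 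You instead take $\rho=q=2$, $(s_1,s_2)=(0,0)$ (the low-frequency factor is $(1+t-\tau)^{0}$ and the high-frequency part is absorbed via $\|\chi_{\rm H}\varphi\|_{H^{-1}}\lesssim\|\varphi\|_{L^2}$ and $e^{-c(t-\tau)}\leq 1$), and pair $|u-v|\in L^2$ directly with $|u|^{2/n}\mu(|u|)+|v|^{2/n}\mu(|v|)\in L^\infty$; since $\|u\|_{L^\infty},\|v\|_{L^\infty}\leq\varepsilon_0\leq1$, condition (\ref{Condition1.1.2}) and the monotonicity (and doubling, from concavity) of $\mu$ justify your pointwise bound, and the whole time integral collapses at once to $\int_0^t(1+\tau)^{-1}\mu\left(\varepsilon_0(1+\tau)^{-\frac{n}{2}}\right)d\tau\lesssim\mathcal{I}(\varepsilon_0)$, uniformly in $t$, with no splitting. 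What the paper's extra decay factor $(1+t-\tau)^{-\frac{n}{2}(\frac{1}{m}-\frac{1}{2})}$ buys is nothing for this particular statement (it is discarded in $\mathcal{J}_1$, and $\mathcal{J}_2$ is handled by the same swap you would use); it would only matter if one wanted a time-weighted contraction estimate. Your version dispenses with the parameters $m,\eta,\delta$, the embedding $L^2\hookrightarrow$ image of $\langle\nabla\rangle L^m$, and the interpolation step, at no loss of strength, and correctly identifies $p-1=2/n$ in the statement.
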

    \begin{proof}
        From the definition of the spaces $Y(T)$ combined with Lemma \ref{LinearEstimates} with $j =0, \,\rho = m \in (1,2), q =2 $ and $(s_1,s_2)=(0,0)$, we obtain
        \begin{align*}
            &\|\Phi[u]-\Phi[v]\|_{Y(T)}\\
        &\quad\lesssim \sup_{t \in [0,T]} \left\{\int_0^t (1+t-\tau)^{-\frac{n}{2}(\frac{1}{m}-\frac{1}{2})}\|\mathcal{N}(u(\tau,\cdot))-\mathcal{N}(v(\tau,\cdot))\|_{L^m \cap H^{-1}} d\tau\right\}\\
            &\quad\lesssim \sup_{t \in [0,T]} \left\{\int_0^t (1+t-\tau)^{-\frac{n}{2}(\frac{1}{m}-\frac{1}{2})}\|\mathcal{N}(u(\tau,\cdot))-\mathcal{N}(v(\tau,\cdot))\|_{L^m} d\tau\right\},
        \end{align*}
        where we have the embedding
        \begin{align*}
            \|\varphi\|_{L^2} \lesssim \|\langle \nabla \rangle \varphi\|_{L^m} \quad\text{ with } \quad \frac{1}{m} \leq \frac{1}{2} +\frac{1}{n} \text{ and } m \in (1,2).
        \end{align*}
    Additionally,  using the assumption (\ref{Condition1.1.2}) we gain
    \begin{align*}
        &\left|\mathcal{N}(u(\tau,\cdot))-\mathcal{N}(v(\tau,\cdot))\right| \\
        &\quad=\left| (u-v)(\tau,\cdot) \times \left(\int_0^1  (\partial\mathcal{N})(v +\kappa(u-v)) d\kappa\right)(\tau,\cdot)  \right| \\
        &\quad\lesssim |(u-v)(\tau,\cdot)| \times \left|\left(\int_0^1 |v+ \kappa(u-v)|^{\frac{2}{n}} \mu\left(|v+\kappa(u-v)|\right) d\kappa\right)(\tau,\cdot)\right|\\
        &\quad\lesssim |(u-v)(\tau,\cdot) | \left(|u(\tau,\cdot)|^{\frac{2}{n}} + |v(\tau,\cdot)|^{\frac{2}{n}} \right) \int_0^1\|\mu(v+\kappa(u-v))(\tau,\cdot)\|_{L^\infty} d\kappa.
    \end{align*}
   Therefore,  thanks to H\"older's inequality and interpolation  one has
    \begin{align*}
        &\|\mathcal{N}(u(\tau,\cdot))-\mathcal{N}(v(\tau,\cdot))\|_{L^m} \\
        &\quad
        \lesssim \|u(\tau,\cdot)-v(\tau,\cdot)\|_{L^2} \left(\|u(\tau,\cdot)\|_{L^{\frac{2}{n} \eta}}^{\frac{2}{n}} + \|v(\tau,\cdot)\|_{L^{\frac{2}{n}\eta}}^{\frac{2}{n}}\right) \int_0^1 \|\mu(v+\kappa(u-v))(\tau,\cdot)\|_{L^\infty} d\kappa \\
        &\quad\lesssim (1+\tau)^{-1+\frac{n}{2\eta}} \mu\left(\varepsilon_0 (1+\tau)^{-\frac{n}{2}}\right) \|u-v\|_{Y(T)} \left(\|u\|_{X(T)}^{\frac{2}{n}}+\|v\|_{X(T)}^{\frac{2}{n}}\right)\\
        &\quad\lesssim (1+\tau)^{-1+\frac{n}{2}(\frac{1}{m}-\frac{1}{2})} \mu\left(\varepsilon_0 (1+\tau)^{-\frac{n}{2}}\right) \|u-v\|_{Y(T)}\left(\|u\|_{X(T)}^{\frac{2}{n}} + \|v\|_{X(T)}^{\frac{2}{n}}\right),
    \end{align*}
    provided that the following conditions are satisfied:
    \begin{align*}
     m \in (1,2),\quad  \frac{1}{m}=\frac{1}{2}+\frac{1}{\eta} \leq \frac{1}{2}+\frac{1}{n} \quad\text{ and } \alpha \leq \frac{2\eta}{n} < \infty.
    \end{align*}
    Thus, one can choose $1/ \eta =\delta \ll 1$ to ensure the existence of the parameter $m$.
    Summarily, we obtain the following estimate:
    \begin{align*}
        \|\Phi[u]-\Phi[v]\|_{Y(T)} \lesssim \sup_{t \in [0,T]} \big\{\mathcal{J}_1(t) +\mathcal{J}_2(t)\big\} \|u-v\|_{Y(T)}\left(\|u\|_{X(T)}^{\frac{2}{n}} + \|v\|_{X(T)}^{\frac{2}{n
        }}\right),
    \end{align*}
    where 
    \begin{align*}
        \mathcal{J}_1(t) &:=  \int_0^{t/2} (1+t-\tau)^{-\frac{n}{2}(\frac{1}{m}-\frac{1}{2})}(1+\tau)^{-1+\frac{n}{2}(\frac{1}{m}-\frac{1}{2})} \mu\left(\varepsilon_0 (1+\tau)^{-\frac{n}{2}}\right) d\tau\\
         &\lesssim (1+t)^{-\frac{n}{2}(\frac{1}{m}-\frac{1}{2})} \int_0^{t/2} (1+\tau)^{-1+\frac{n}{2}(\frac{1}{m}-\frac{1}{2})} \mu\left(\varepsilon_0 (1+\tau)^{-\frac{n}{2}}\right) d\tau \lesssim \mathcal{I}(\varepsilon_0),\\
         \mathcal{J}_2(t) &:= \int_{t/2}^t (1+t-\tau)^{-\frac{n}{2}(\frac{1}{m}-\frac{1}{2})}(1+\tau)^{-1+\frac{n}{2}(\frac{1}{m}-\frac{1}{2})} \mu\left(\varepsilon_0 (1+\tau)^{-\frac{n}{2}}\right) d\tau \\
         &\lesssim \int_{t/2}^{t} (1+t-\tau)^{-1} \mu\left(\varepsilon_0 (1+t-\tau)^{-\frac{n}{2}}\right) d\tau \lesssim \mathcal{I}(\varepsilon_0),
    \end{align*}
    for all $t > 0$ due to $n\left(1/m-1/2\right) < 2$ and $1 +\tau \geq 1+t-\tau$ for $\tau \in [t/2, t]$.
    Thus, we complete the proof of Proposition \ref{Pro1.2}.
 \end{proof}
    \textbf{Proof of Theorem \ref{Theorem1}.}  
Using again Lemma \ref{LinearEstimates} and the relation (\ref{Re10}) for the norm $L^\infty$, one derives
\begin{align*}
    \| u^{\rm lin}\|_{X(T)} \lesssim \varepsilon\|(u_0, u_1)\|_{\mathcal{D}},
\end{align*}
for all $T > 0$.
For this reason, combining with Propositions \ref{Pro1.1} and \ref{Pro1.2}, we obtain the following estimates for all $u,v \in X(T, \varepsilon_0), \,\varepsilon_0 \in (0, 1]$:
    \begin{align}
        \| \Phi[u]\|_{X(T)} &\leq C_1 \varepsilon \|(u_0, u_1)\|_{\mathcal{D}} +  C_1 \mathcal{I}(\varepsilon_0)\|u\|_{X(T)}^{1+\frac{2}{n}}, \label{Es.Pro2.1}\\
        \|\Phi[u]-\Phi[v]\|_{Y(T)} &\leq C_2 \mathcal{I}(\varepsilon_0)\|u-v\|_{Y(T)}\left(\|u\|_{X(T)}^{\frac{2}{n}}+\|v\|_{X(T)}^{\frac{2}{n}}\right).\label{Es.Pro2.2}
    \end{align}
    Let us fix 
    \begin{align}\label{Cons1}
    \bar{\varepsilon} := \frac{\varepsilon_0}{2 C_1\|(u_0, u_1)\|_{\mathcal{D}}}
    \end{align}
    and $\varepsilon_0$ satisfies
    \begin{align*}
        \max\{C_1, 2C_2\} \mathcal{I}(\varepsilon_0) \varepsilon_0^{\frac{2}{n}} \leq \frac{1}{2}.
    \end{align*}
    As a result, the inequalities (\ref{Es.Pro2.1}) and (\ref{Es.Pro2.2}) become
    \begin{align}
        \|\Phi[u]\|_{X(T)} &\leq \varepsilon_0, \label{Main.ine.1}\\
        \|\Phi[u]-\Phi[v]\|_{Y(T)} &\leq \frac{1}{2} \|u-v\|_{Y(T)}, \label{Main.ine.2}
    \end{align}
    for all $u,v \in X(T, \varepsilon_0),\, \varepsilon \in (0, \bar{\varepsilon}]$.
   Next, taking the recurrence sequence $\{u_j\}_{j=0}^{\infty}$ with $u_0 = 0;\,  u_{j} = \Phi[u_{j-1}]$ for $j = 1,2,...$, we employ (\ref{Main.ine.1}) to conclude that $$ \{u_j\}_{j=0}^{\infty} \subset X(T,\varepsilon_0), $$ for all $\varepsilon \in (0, \bar{\varepsilon}]$. 
Moreover, using (\ref{Main.ine.2}) implies that  $\{u_j\}_{j=0}^\infty$ is a Cauchy sequence in the Banach
space $Y(T)$.  Therefore, from Lemma \ref{lemma1.2}, there exists a solution $u = \Phi[u]$ in $X(T, \varepsilon_0)$ for all $T >0$. Since $T$ is arbitrary, the solution is global, that is, $u \in X(\infty, \varepsilon_0)$. 
 
 Next, we need to show that $u \in \mathcal{C}([0,\infty), H^2 \cap L^{\alpha} \cap L^{\infty})$. To prove this property, we recall the solution formula (\ref{Solution}).
 Since the linear part $u^{\rm lin}(t,x)$ obviously satisfies continuity, it suffices to show that
\begin{align}
    \int_0^t  \mathcal{K}(t-\tau,x) \ast_x \mathcal{N}(u(\tau,x)) d\tau \in \mathcal{C}([0,\infty), H^s \cap L^{\alpha} \cap L^\infty). \label{EQ2}
\end{align}
Noting that $u \in X(\infty, \varepsilon_0)$ and using again Lemmas \ref{LinearEstimates} and \ref{lemma1.3}, we have the following estimates:
\begin{align*}
    \|\mathcal{K}(t-\tau,x) \ast_x \mathcal{N}(u(\tau, x))\|_{L^{\alpha}} &\lesssim \mu\left(\varepsilon_0 (1+\tau)^{-\frac{n}{2}}\right) (1+\tau)^{-\frac{n}{2}(1+\frac{2}{n}-\frac{1}{\alpha})} \|u\|_{X(\infty)}^{1+\frac{2}{n}},\\
    \|\mathcal{K}(t-\tau,x) \ast_x \mathcal{N}(u(\tau, x))\|_{\dot{H}^2} &\lesssim \mu\left(\varepsilon_0 (1+\tau)^{-\frac{n}{2}}\right)  (1+\tau)^{-\frac{n}{4}-\frac{3}{2}} \|u\|_{X(\infty)}^{1+\frac{2}{n}}, \\
    \|\mathcal{K}(t-\tau,x) \ast_x \mathcal{N}(u(\tau, x))\|_{L^\infty} &\lesssim \mu\left(\varepsilon_0 (1+\tau)^{-\frac{n}{2}}\right) (1+\tau)^{-\frac{n}{2}-1} \|u\|_{X(\infty)}^{1+\frac{2}{n}}.
\end{align*}
Therefore, the Lebesgue convergence theorem in the Bochner integral immediately implies (\ref{EQ2}).

 Finally, we establish the uniqueness of the global solution 
$u$, which lies in the space $\mathcal{C}([0, \infty), H^2 \cap L^\alpha \cap L^\infty)$. Let $u, v$ be solutions to (\ref{Main.Eq.1}) belonging to this space. For the arbitrary positive number $T$, we can see that there exists a constant $M(T)$ such that $\|u\|_{X(T)}^{2/n}+ \|v\|_{X(T)}^{2/n} \leq M(T)$. Performing the same proof steps of Proposition \ref{Pro1.2}, we obtain
\begin{align*}
    \|u-v\|_{Y(t)} \lesssim M(T) \int_0^t \| u -v\|_{Y(\tau)}  d\tau,
\end{align*}
for all $t \in [0,T]$. 
From this, the
Gronwall inequality implies $ u \equiv v$ on $[0,T]$. Because $T $ is an arbitrary positive number, $u \equiv v$ on $[0, \infty)$.
Hence, the proof of Theorem \ref{Theorem1} is completed.
\section{Asymptotic behaviors of global solution}\label{Section_Asym}
In this section, our main aim is to prove Theorem \ref{Theorem3}. To do this, we recall the following important results.
\vspace{0.2cm}

\begin{lemma}\label{Lemma3.1}
    Let $1 \leq \rho \leq q < \infty$, $q \ne 1$, $\beta_q := (n-1)\big|\frac{1}{2}-\frac{1}{q}\big|$ and $s_1 \geq s_2 \geq 0$. Then, the following estimate holds for $t \geq 1$: 
    \begin{align*}
        &\||\nabla|^{s_1}(\mathcal{K}(t,x)- \mathcal{G}(t,x)) \ast_x \varphi(x)\|_{L^q}\\ &\hspace{1cm}\lesssim (1+t)^{-\frac{n}{2}(\frac{1}{\rho}-\frac{1}{q})-\frac{s_1-s_2}{2}-1} \||\nabla|^{s_2} \chi_{\rm L}(|\nabla|)\varphi\|_{L^{\rho}} + e^{-ct} \||\nabla|^{s_1} \chi_{\rm H}(|\nabla|) \varphi\|_{H^{\beta_q-1}_q},
    \end{align*}
    where $c$ is a suitable positive constant. Furthermore, we obtain the following estimate for $ 1\leq \rho \leq \infty, 1 < r < \infty$ and $d > \displaystyle\frac{n}{r}$:
    \begin{align*}
        &\||\nabla|^{s_1}(\mathcal{K}(t,x)- \mathcal{G}(t,x)) \ast_x \varphi(x)\|_{L^\infty}\\ &\hspace{1cm}\lesssim (1+t)^{-\frac{n}{2\rho}-\frac{s_1-s_2}{2}-1} \||\nabla|^{s_2} \chi_{\rm L}(|\nabla|)\varphi\|_{L^{\rho}} + e^{-ct} \||\nabla|^{s_1+d} \chi_{\rm H}(|\nabla|) \varphi\|_{H^{\beta_r-1}_r}.
    \end{align*}
\end{lemma}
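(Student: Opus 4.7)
The statement is a refinement of Lemma~\ref{LinearEstimates} in which $\mathcal{K}$ is replaced by $\mathcal{K}-\mathcal{G}$ and one extra $(1+t)^{-1}$ decay factor appears in the polynomial part. Since this amounts to quantifying how well the Gauss kernel approximates the damped-wave kernel, I would follow the same high/low frequency decomposition used in the proof of Lemma~\ref{LinearEstimates} via the cut-offs $\chi_{\rm L}$ and $\chi_{\rm H}$, reducing everything to pointwise estimates on the difference $\widehat{\mathcal{K}}(t,\xi)-\widehat{\mathcal{G}}(t,\xi)=\widehat{\mathcal{K}}(t,\xi)-e^{-|\xi|^2 t}$.

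\textbf{High-frequency part.} On $\{|\xi|\ge \varepsilon^*/2\}$ the Gauss symbol $e^{-|\xi|^2 t}$ and all its $\xi$-derivatives are bounded by $e^{-ct}$, so the $\mathcal{G}$-contribution is dominated by the high-frequency term appearing in the statement. The $\mathcal{K}$-contribution on the same set was already shown to decay like $e^{-ct}$ in Lemma~\ref{LinearEstimates}, invoking Propositions~2.4--2.5 of~\cite{Ikeda2019}. Subtracting the two yields the high-frequency half of the claim, with the same regularity exponents $\beta_q-1$ (respectively $\beta_r-1$) as in Lemma~\ref{LinearEstimates}.

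\textbf{Low-frequency part.} This is the core of the proof. Set $\omega(\xi):=\sqrt{1/4-|\xi|^2}$, which for $\varepsilon^*$ small enough is smooth and bounded away from $0$ on $\{|\xi|\le \varepsilon^*\}$. The algebraic identities
\begin{align*}
\omega(\xi)-\tfrac{1}{2}=-\frac{|\xi|^2}{\omega(\xi)+1/2},\qquad \omega(\xi)-\tfrac{1}{2}+|\xi|^2=-\frac{|\xi|^4}{(\omega(\xi)+1/2)^2}
\end{align*}
let me rewrite the low-frequency symbol as
\begin{align*}
\widehat{\mathcal{K}}(t,\xi)-e^{-|\xi|^2t}=\Bigl(\frac{1}{2\omega(\xi)}-1\Bigr)e^{-|\xi|^2 t}+\frac{e^{-|\xi|^2 t}}{2\omega(\xi)}\Bigl(e^{-t|\xi|^4/(\omega(\xi)+1/2)^2}-1\Bigr)-\frac{e^{-t/2-t\omega(\xi)}}{2\omega(\xi)}.
\end{align*}
The first summand gains an explicit $|\xi|^2$ from the Taylor expansion of $(2\omega)^{-1}-1$ at $\xi=0$; the second is controlled by $|e^{-a}-1|\le a$ with $a=t|\xi|^4/(\omega+1/2)^2$, yielding, after multiplication by $e^{-|\xi|^2 t}$, a bound of the form $|\xi|^2\cdot(t|\xi|^2)\,e^{-|\xi|^2 t}\lesssim |\xi|^2 e^{-c|\xi|^2 t}$ (since $s\mapsto s e^{-s}$ is bounded); the third is $O(e^{-ct})$ uniformly on $\{|\xi|\le \varepsilon^*\}$. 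Thus
\begin{align*}
\bigl|\widehat{\mathcal{K}}(t,\xi)-e^{-|\xi|^2t}\bigr|\chi_{\rm L}(|\xi|)\lesssim |\xi|^2 e^{-c|\xi|^2 t}+e^{-ct},
\end{align*}
with analogous bounds for the $\xi$-derivatives needed when $\rho<q$. Feeding this improved pointwise estimate into the Hausdorff--Young and Riesz--Thorin arguments of Proposition~2.4 of~\cite{Ikeda2019} produces the additional $(1+t)^{-1}$ factor, and the passage to the $L^\infty$ case via an intermediate $L^r$ with $d>n/r$ is identical to the corresponding step in Lemma~\ref{LinearEstimates}.

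\textbf{Main obstacle.} The principal difficulty is the careful bookkeeping of the extra $|\xi|^2$ in the symbol difference, uniformly on the support of $\chi_{\rm L}$ and for all $\xi$-derivatives up to the order imposed by $s_1-s_2$, while simultaneously managing the large-time regimes where $|\xi|^2 t$ and $t|\xi|^4$ are both large; once this pointwise control is in hand, the rest of the argument proceeds by the same Young's inequality and interpolation route as in the proof of Lemma~\ref{LinearEstimates}.
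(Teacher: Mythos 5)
Your proposal is correct in substance, but it follows a genuinely different (more self-contained) route than the paper. The paper's own proof of Lemma \ref{Lemma3.1} is essentially a citation: the $L^q$ bound is taken directly from Theorem 1.2 of \cite{Ikeda2019}, the low-frequency $L^\infty$ difference estimate for $\chi_{\rm L}(|\nabla|)\mathcal{K}-\mathcal{G}$ is quoted from Proposition 2.10 of \cite{Ikeda2019} (already containing the extra $(1+t)^{-1}$), the high-frequency part of $\mathcal{K}$ from Proposition 2.5, and the only argument actually carried out is the passage to $L^\infty$ through $\langle \nabla\rangle^{d}$ and an $L^r$-based potential space with $d>n/r$ — exactly the step you also describe. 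You instead re-derive the key low-frequency input by hand: your algebraic identities for $\omega(\xi)-\tfrac12$ and $\omega(\xi)-\tfrac12+|\xi|^2$ are correct, the resulting splitting of $\widehat{\mathcal{K}}(t,\xi)-e^{-|\xi|^2t}$ checks out, and the bound $|\xi|^2e^{-c|\xi|^2t}+e^{-ct}$ on the support of $\chi_{\rm L}$ is precisely the mechanism that produces the additional $t^{-1}$ (and, with $|\xi|^{s_1-s_2}e^{-c|\xi|^2 t}\lesssim t^{-(s_1-s_2)/2}$, the smoothing factor). What your route buys is independence from Proposition 2.10 of \cite{Ikeda2019}, at the cost of the multiplier bookkeeping you flag yourself: to run the $L^\rho\to L^q$ convolution/Young-type argument for $\rho<q$ one must also control the $\xi$-derivatives of the symbol difference uniformly on the support of $\chi_{\rm L}$, and one must check that the high-frequency Gauss contribution $e^{-|\xi|^2t}\chi_{\rm H}$ can indeed be absorbed into the weaker norm $H^{\beta_q-1}_q$ (harmless for $t\ge1$, since $e^{-|\xi|^2t}\langle\xi\rangle^{1-\beta_q}\chi_{\rm H}$ is $e^{-ct}$ times a Mikhlin multiplier); the paper's citation-based proof avoids both issues entirely.
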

\begin{proof}
  The asymptotic behaviors in the norm $L^q$ with $1 < q< \infty$
 are obtained from Theorem 1.2 in \cite{Ikeda2019}. Therefore, we need to prove the estimate of the $L^\infty$ norm.  Proposition 2.10 in \cite{Ikeda2019} implies
    \begin{align*}
        &\||\nabla|^{s_1}\big(\chi_{\rm L}(|\nabla|)\mathcal{K}(t,x)- \mathcal{G}(t,x)\big) \ast_x \varphi(x)\|_{L^\infty} \\
        &\quad\quad\lesssim (1+t)^{-\frac{n}{2\rho}-\frac{s_1-s_2}{2}-1} \||\nabla|^{s_2} \chi_{\rm L}(|\nabla|)\varphi\|_{L^{\rho}}, 
    \end{align*}
    for all $1 \leq \rho \leq \infty$ and $s_1 \geq s_2 \geq 0$. Combining with the estimate (\ref{Esti100}), we can conclude the asymptotic behaviors in the norm $L^\infty$.
\end{proof}

\begin{lemma}\label{Lemma3.2}
    Let $1 \leq q \leq \infty,\,\, \gamma \geq 0,\, j \in \mathbb{N}$. Then, we have the following estimates for all $t > 0$:
        \begin{align}
        \|\partial_t^j |\nabla|^\gamma \mathcal{G}(t,\cdot)\|_{L^q} \lesssim t^{-\frac{n}{2}(1-\frac{1}{q})-\frac{\gamma}{2}-j}. \label{Es.of.lemm3.2}
    \end{align}
    Moreover, the following relation holds for $\gamma \in \mathbb{N}$ and $\varphi \in L^1, \,t \gg 1$:
    \begin{align}
        \left\||\nabla|^{\gamma}\mathcal{G}(t,x) \ast_x \varphi(x)- \left(\int_{\mathbb{R}^n} \varphi(x) dx\right) |\nabla|^{\gamma} \mathcal{G}(t,x)\right\|_{L^q} = o(t^{-\frac{n}{2}(1-\frac{1}{q})-\frac{\gamma}{2}}). \label{Re.of.lemma3.2}
    \end{align}
\end{lemma}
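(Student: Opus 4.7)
\textbf{Plan for the proof of Lemma \ref{Lemma3.2}.}

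For \eqref{Es.of.lemm3.2}, the plan is to exploit the heat-kernel scaling of $\mathcal{G}$. Since $\partial_t\mathcal{G}=\Delta\mathcal{G}$, one has $\partial_t^j|\nabla|^\gamma\mathcal{G}=|\nabla|^{\gamma+2j}\mathcal{G}$, so it suffices to bound $\||\nabla|^{\gamma+2j}\mathcal{G}(t,\cdot)\|_{L^q}$. Using $\widehat{\mathcal{G}}(t,\xi)=e^{-|\xi|^2 t}$ and the change of variables $\xi=\eta/\sqrt{t}$ on the Fourier side, I would write
\[
|\nabla|^{\gamma+2j}\mathcal{G}(t,x)=t^{-\frac{n}{2}-\frac{\gamma}{2}-j}\,g_{\gamma+2j}\!\left(\frac{x}{\sqrt{t}}\right),\qquad g_\sigma(y):=\mathfrak{F}^{-1}\bigl(|\eta|^\sigma e^{-|\eta|^2}\bigr)(y),
\]
where $g_\sigma$ is smooth and decays faster than any polynomial, hence $g_\sigma\in L^q(\mathbb{R}^n)$ for all $q\in[1,\infty]$. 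A further change of variables $x=\sqrt{t}\,y$ in the $L^q$ integral then yields exactly the claimed bound $t^{-\frac{n}{2}(1-\frac{1}{q})-\frac{\gamma}{2}-j}$, and for $t\gg 1$ this implies the stated estimate on $(1+t)$ scale.

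For \eqref{Re.of.lemma3.2}, the approach is the standard density argument combined with a Taylor expansion of the kernel. Setting
\[
H_\varphi(t,x):=\int_{\R^n}\bigl[(|\nabla|^\gamma\mathcal{G})(t,x-y)-(|\nabla|^\gamma\mathcal{G})(t,x)\bigr]\varphi(y)\,dy,
\]
one has, by the definition of convolution and the identity $\int\varphi=M_\varphi$, that $H_\varphi$ is exactly the left-hand side of \eqref{Re.of.lemma3.2}. First I would treat $\varphi\in C^\infty_c(\R^n)$ with $\mathrm{supp}\,\varphi\subset B_R$. Writing the bracket as $-\int_0^1 y\cdot \nabla(|\nabla|^\gamma\mathcal{G})(t,x-\theta y)\,d\theta$ and applying Minkowski's inequality together with translation invariance of $\|\cdot\|_{L^q}$, I obtain
\[
\|H_\varphi(t,\cdot)\|_{L^q}\lesssim \|\nabla(|\nabla|^\gamma\mathcal{G})(t,\cdot)\|_{L^q}\int_{\R^n}|y||\varphi(y)|\,dy \lesssim R\,\|\varphi\|_{L^1}\,t^{-\frac{n}{2}(1-\frac{1}{q})-\frac{\gamma}{2}-\frac{1}{2}},
\]
where the kernel estimate comes from part \eqref{Es.of.lemm3.2} applied to the Fourier multiplier $\xi_k|\xi|^\gamma$ via the same scaling. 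This gives the desired $o(\cdot)$ statement for smooth compactly supported $\varphi$.

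Finally, to pass to arbitrary $\varphi\in L^1$, given $\e>0$ I would pick $\psi_\e\in C^\infty_c$ with $\|\varphi-\psi_\e\|_{L^1}<\e$ and split $H_\varphi=H_{\psi_\e}+H_{\varphi-\psi_\e}$. For the remainder, Young's convolution inequality together with part \eqref{Es.of.lemm3.2} yields
\[
\|H_{\varphi-\psi_\e}(t,\cdot)\|_{L^q}\leq 2\||\nabla|^\gamma\mathcal{G}(t,\cdot)\|_{L^q}\|\varphi-\psi_\e\|_{L^1}\lesssim \e\, t^{-\frac{n}{2}(1-\frac{1}{q})-\frac{\gamma}{2}},
\]
so $\limsup_{t\to\infty} t^{\frac{n}{2}(1-\frac{1}{q})+\frac{\gamma}{2}}\|H_\varphi(t,\cdot)\|_{L^q}\lesssim \e$, and since $\e>0$ is arbitrary the limit is $0$. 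The main subtle point I expect is the combination of the $|\nabla|^\gamma$ operator (defined via the Fourier multiplier $|\xi|^\gamma$) with the first-order Taylor remainder, but because $|\nabla|^\gamma\mathcal{G}$ is smooth with Gaussian decay this causes no real difficulty; everything reduces to the scaling identity above.
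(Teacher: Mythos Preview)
Your argument is correct and follows the standard route; the paper, by contrast, does not give a self-contained proof but simply invokes two references: Theorem 24.2.4 of \cite{EbertReissig2018} for the scaling estimate \eqref{Es.of.lemm3.2} (after the same reduction $\partial_t^j|\nabla|^\gamma\mathcal{G}=\mathfrak{F}^{-1}\bigl(|\xi|^{2j+\gamma}e^{-|\xi|^2t}\bigr)$ that you use), and Lemma~A.1 of \cite{IkehataMichihisa2019} for the asymptotic relation \eqref{Re.of.lemma3.2}. Your scaling computation and your density/Taylor argument are exactly what underlies those cited results, so the mathematics is the same; you simply supply the details the paper outsources.

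One small imprecision worth fixing: for general $\sigma=\gamma+2j\ge 0$, the profile $g_\sigma=\mathfrak{F}^{-1}\bigl(|\eta|^\sigma e^{-|\eta|^2}\bigr)$ does \emph{not} decay faster than every polynomial unless $\sigma$ is an even integer (so that $|\eta|^\sigma e^{-|\eta|^2}$ is Schwartz). For non-even $\sigma>0$ the symbol has a $|\eta|^\sigma$-type singularity at the origin, and $g_\sigma(y)$ decays only like $|y|^{-n-\sigma}$. This weaker decay is still more than enough to place $g_\sigma$ in every $L^q$, $1\le q\le\infty$, so your conclusion is unaffected; just adjust the justification. The same remark applies to the auxiliary profile $\mathfrak{F}^{-1}\bigl(\eta_k|\eta|^\gamma e^{-|\eta|^2}\bigr)$ you use for $\nabla(|\nabla|^\gamma\mathcal{G})$.
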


\begin{proof}
To prove the estimate (\ref{Es.of.lemm3.2}), we apply Theorem 24.2.4 in \cite{EbertReissig2018} as follows:
    \begin{align*}
        \|\partial_t^j |\nabla|^{\gamma} \mathcal{G}(t,\cdot)\|_{L^q} =C \left\|\mathfrak{F}^{-1}\left(|\xi|^{2j+\gamma}e^{-|\xi|^2t}\right)(t,\cdot)\right\|_{L^q} \lesssim t^{-\frac{n}{2}(1-\frac{1}{q})-\frac{\gamma}{2}-j}.
    \end{align*}
  The relation (\ref{Re.of.lemma3.2}) follows directly from Lemma A.1 in \cite{IkehataMichihisa2019}.
\end{proof}
\textbf{Proof of Theorem \ref{Theorem3}.} To begin with, we recall that the global solution to (\ref{Main.Eq.1}) satisfies the following integral equation (\ref{Solution}). Next,
we shall use the notation and results established in Section \ref{Section2} and fix $(\theta, s)$ as one of the two pairs $(\alpha, 0)$ and $(2,2)$. Lemmas \ref{Lemma3.1} and \ref{Lemma3.2} yield
\begin{align*}
    &\left\||\nabla|^s \left(u^{\rm lin}(t,x)-\varepsilon\left(\int_{\mathbb{R}^n}(u_0(x) + u_1(x))dx\right)\mathcal{G}(t,x)\right) \right\|_{L^\theta}\\
    &\quad \lesssim \varepsilon\left\||\nabla|^s\big(\mathcal{K}(t,x)-\mathcal{G}(t,x)\big)\ast_x (u_0+ u_1)(x)\right\|_{L^\theta} \\
    &\quad\qquad\quad+ \varepsilon\left\||\nabla|^s\left(\mathcal{G}(t,x) \ast_x(u_0+u_1)(x)-\left(\int_{\mathbb{R}^n} (u_0(x)+u_1(x))dx\right) \mathcal{G}(t,x)\right) \right\|_{L^\theta} \\
    &\quad\qquad\quad+ \varepsilon\|\partial_t |\nabla|^s \mathcal{K}(t,x) \ast_x u_0(x)\|_{L^\theta} \\
    &\quad = o(t^{-\frac{n}{2}(1-\frac{1}{\theta})-\frac{s}{2}}), \quad t \gg 1.
\end{align*}
Here we used Lemma \ref{Lemma3.1} for the first term, the relation (\ref{Re.of.lemma3.2}) for the second term, and Lemma \ref{LinearEstimates} for the third term.
Therefore, we need to prove the following relation:
\begin{align}
    &\left\||\nabla|^s \left(u^{\rm non}(t,x)- \left(\int_0^{\infty} \int_{\mathbb{R}^n} \mathcal{N}(u(t, x))dxdt\right) \mathcal{G}(t,x) \right)\right\|_{L^{\theta}} \notag\\
    &\hspace{4cm}= o(t^{-\frac{n}{2}(1-\frac{1}{\theta})-\frac{s}{2}}) , \label{Main.Rela.1}
\end{align}
for $(\theta, s) \in \{(2,2), (\alpha, 0)\}$ and $ t \gg 1$.  Let us now divide the left-hand side term of (\ref{Main.Rela.1}) in the norm into five parts as follows:
\begin{align}
    &u^{\rm non}(t,x)- \left(\int_0^{\infty} \int_{\mathbb{R}^n} \mathcal{N}(u(\tau, x))dxd\tau\right) \mathcal{G}(t,x) \notag\\
    &\quad= \int_0^t \mathcal{K}(t-\tau, x) \ast_x \mathcal{N}(u(\tau,x)) d\tau - \left(\int_0^{\infty} \int_{\mathbb{R}^n} \mathcal{N}(u(\tau, x))dxd\tau\right) \mathcal{G}(t,x) \notag\\
    &\quad= \int_0^{t/2} (\mathcal{K}(t-\tau, x)-\mathcal{G}(t-\tau,x)) \ast_x \mathcal{N}(u(\tau,x)) d\tau \notag\\
    &\quad\quad+ \int_{t/2}^t \mathcal{K}(t-\tau, x) \ast_x \mathcal{N}(u(\tau,x)) d\tau \notag\\
    &\quad\quad+ \int_0^{t/2} (\mathcal{G}(t-\tau,x) - \mathcal{G}(t,x)) \ast_x \mathcal{N}(u(\tau,x)) d\tau\notag\\
    &\quad\quad + \int_0^{t/2} \mathcal{G}(t,x) \ast_x \mathcal{N}(u(\tau,x)) - \left(\int_{\mathbb{R}^n} \mathcal{N}(u(\tau, x))dx\right) \mathcal{G}(t,x) d\tau\notag\\
    &\quad\quad - \left(\int_{t/2}^\infty \int_{\mathbb{R}^n} \mathcal{N}(u(\tau, x))dxd\tau\right) \mathcal{G}(t,x)\notag\\
    &\quad =: K_1(t,x)+ K_2(t,x) + K_3(t,x) + K_4(t,x) - K_5(t,x). \label{Relation0}
\end{align}
We proved that Theorem \ref{Theorem1} ensures the existence of a unique global solution $u$ satisfying $$\|u\|_{X(\infty)} \leq \varepsilon_0.$$ 
Therefore, thanks to again Lemma \ref{lemma1.3} and Lemma \ref{Lemma3.1} for $\rho = 1, \, q = \theta$ and $(s_1, s_2) = (s,0)$, we obtain
\begin{align}
    \||\nabla|^s K_1(t,\cdot)\|_{L^\theta} &\lesssim \int_0^{t/2} (1+t-\tau)^{-\frac{n}{2}(1-\frac{1}{\theta})-\frac{s}{2}-1} \|\mathcal{N}(u(\tau,\cdot))\|_{L^1 \cap H^{s+\beta_\theta-1}_{\theta}} d\tau \notag\\
    & \lesssim \varepsilon_0^{1+\frac{2}{n}} (1+t)^{-\frac{n}{2}(1-\frac{1}{\theta})-\frac{s}{2}-1} \int_0^{t/2} (1+\tau)^{-1} \mu\big(\varepsilon_0 (1+\tau)^{-\frac{n}{2}}\big) d\tau\notag\\
    &\lesssim \mathcal{I}(\varepsilon_0) \varepsilon_0^{1+\frac{2}{n}} (1+t)^{-\frac{n}{2}(1-\frac{1}{\theta})-\frac{s}{2}-1} = o\big(t^{-\frac{n}{2}(1-\frac{1}{\theta})-\frac{s}{2}}),  \quad t \gg 1, \label{Relation1}
\end{align}
for $(\theta, s) = (\alpha, 0)$ or $(2,2)$ and $ t \gg 1$. Now, we consider the term $K_2(t,x)$. Specicially, employing again Lemma \ref{lemma1.3} and Lemma \ref{LinearEstimates} for $\rho =q =\theta$ and $(s_1, s_2) = (s, s/2)$ we get
\begin{align}
    &\||\nabla|^s K_2(t,\cdot)\|_{L^\theta}\notag\\ 
    &\quad\lesssim \int_{t/2}^t (1+t-\tau)^{-\frac{s}{4}}\|\chi_{\rm L}(|\nabla|)\mathcal{N}(u(\tau,\cdot))\|_{\dot{H}^{s/2}_\theta} \notag\\
    &\hspace{3cm}+ e^{-c(t-\tau)} \|\chi_{\rm H}(|\nabla|)\mathcal{N}(u(\tau,\cdot))\|_{\dot{H}^{s+\beta_\theta-1}_{\theta}} d\tau\notag\\
    &\quad\lesssim  \varepsilon_0^{1+\frac{2}{n}}\int_{t/2}^t (1+t-\tau)^{-\frac{s}{4}}(1+\tau)^{-\frac{n}{2}(1+\frac{2}{n}-\frac{1}{\theta})-\frac{s}{4}} \mu\left(\varepsilon_0 (1+\tau)^{-\frac{n}{2}}\right) d\tau\notag\\
    &\quad\lesssim \varepsilon_0^{1+\frac{2}{n}} (1+t)^{1-\frac{n}{2}(1+\frac{2}{n}-\frac{1}{\theta})-\frac{s}{2}} \mu(\varepsilon_0 (1+t/2)^{-\frac{n}{2}}) \notag\\
    &\quad= \varepsilon_0^{1+\frac{2}{n}} (1+t)^{-\frac{n}{2}(1-\frac{1}{\theta})-\frac{s}{2}} \mu(\varepsilon_0 (1+t/2)^{-\frac{n}{2}}) = o(t^{-\frac{n}{2}(1-\frac{1}{\theta})-\frac{s}{2}}), \quad t \gg 1,\label{Relation2}
\end{align}
 where we note that $\mu(0) = 0$, $\mu$ is continuity and $\beta_\alpha < 1, \, \beta_2 = 0$. Taking account of $K_3(t,x)$ we use the mean value theorem on $t$ to get the following representation:
\begin{align*}
    \mathcal{G}(t-\tau,x) - \mathcal{G}(t,x) = -\tau \partial_t\mathcal{G}(t- \lambda_1 \tau, x),
\end{align*}
with $\lambda_1=\lambda_1(t,\tau)\in [0,1]$. Thus, using the relation $t -\lambda_1 \tau \sim t$ for $\tau \in [0, t/2]$ and the estimate (\ref{Es.of.lemm3.2}),  the  following estimates hold for $t \gg 1$:
\begin{align}
    &\||\nabla|^s K_3(t,\cdot)\|_{L^\theta}\notag\\ 
    &\quad\lesssim \int_0^{t/2} \tau(t- \lambda_1\tau)^{-\frac{n}{2}(1-\frac{1}{\theta})-\frac{s}{2}-1} \|\mathcal{N}(u(\tau,\cdot))\|_{L^1} d\tau\notag\\
    &\quad\lesssim \varepsilon_0^{1+\frac{2}{n}}t^{-\frac{n}{2}(1-\frac{1}{\theta})-\frac{s}{2}-1} \int_0^{t/2} \tau (1+\tau)^{-1} \mu\big(\varepsilon_0 (1+\tau)^{-\frac{n}{2}} \big) d\tau\notag\\
    &\quad\lesssim \varepsilon_0^{1+\frac{2}{n}}t^{-\frac{n}{2}(1-\frac{1}{\theta})-\frac{s}{2}-1} \int_0^{t/2} \mu\big(\varepsilon_0 (1+\tau)^{-\frac{n}{2}} \big) d\tau = o(t^{-\frac{n}{2}(1-\frac{1}{\theta})-\frac{s}{2}}),  \label{Relation3}
\end{align}
due to applying the L'Hospital rule, we get
\begin{align*}
    \lim_{t \to \infty} t^{-1} \int_0^{t/2} \mu\big(\varepsilon_0 (1+\tau)^{-\frac{n}{2}} \big) d\tau = 0.
\end{align*}
Next, we estimate the quantity $K_4(t,x)$ by dividing two parts as follows:
\begin{align*}
    K_4(t,x) &= \int_0^{t/2} \int_{\mathbb{R}^n} \big(\mathcal{G}(t, x-y)-\mathcal{G}(t,x)\big)\mathcal{N}(u(\tau,y)) dy d\tau \\
    &= \int_0^{t/2} \int_{|y| \leq t^{1/4}} \big(\mathcal{G}(t, x-y)-\mathcal{G}(t,x)\big)\mathcal{N}(u(\tau,y)) dy d\tau\\
    &\qquad + \int_0^{t/2} \int_{|y| \geq t^{1/4}} \big(\mathcal{G}(t, x-y)-\mathcal{G}(t,x)\big)\mathcal{N}(u(\tau,y)) dy d\tau\\
    & =: K_{41}(t,x) + K_{42}(t,x).
\end{align*}
For the term $K_{41}(t,x)$, we can present the following formula:
\begin{align*}
    \mathcal{G}(t, x-y) - \mathcal{G}(t,x) = \int_0^1 y\nabla \mathcal{G}(t,x-\eta y) d\eta.
\end{align*}
From this, employing Lemma \ref{Lemma3.1} to have
\begin{align*}
    \||\nabla|^s K_{41}(t,\cdot)\|_{L^\theta} &\lesssim \int_0^1\int_0^{t/2} \int_{|y| \leq t^{1/4}} |y|\||\nabla|^{s+1} \mathcal{G}(t, x-\eta y)\|_{L^\theta} |\mathcal{N}(u(\tau,y))| dy d\tau d\eta\\
    &\lesssim t^{-\frac{n}{2}(1-\frac{1}{\theta})-\frac{s+1}{2}} \int_0^{t/2} t^{\frac{1}{4}}\|\mathcal{N}(u(\tau,\cdot))\|_{L^1} d\tau\\
    &\lesssim \varepsilon_0^{1+\frac{2}{n}} t^{-\frac{n}{2}(1-\frac{1}{\theta})-\frac{s}{2}-\frac{1}{4}} \int_0^{t/2} (1+\tau)^{-1} \mu\big(\varepsilon_0 (1+\tau)^{-\frac{n}{2}}\big) d\tau \\
    &= o(t^{-\frac{n}{2}(1-\frac{1}{\theta})-\frac{s}{2}}), \quad t \gg 1.
\end{align*}
Next, employing a change of variable $\lambda := \varepsilon_0 (1+\tau)^{-\frac{n}{2}}$, we gain
\begin{align*}
    \int_0^{\infty}\int_{\mathbb{R}^n} \mathcal{N}(u(\tau,y))dy d\tau &= \int_0^{\infty} \|\mathcal{N}(u(\tau,\cdot))\|_{L^1} d\tau \\
    &\lesssim \int_0^{\infty} (1+\tau)^{-1} \mu\big(\varepsilon_0 (1+\tau)^{-\frac{n}{2}}\big) d\tau = C \int_0^{\varepsilon_0}\frac{\mu(\lambda)}{\lambda} d\lambda < \infty.
\end{align*}
This immediately implies
\begin{align*}
    \lim_{t \to \infty}\int_0^{t/2}\int_{|y| \geq t^{1/4}}   \mathcal{N}(u(\tau,y)) dyd\tau = 0.
\end{align*}
Therefore, thanks to (\ref{Es.of.lemm3.2}), the term $K_{42}(t,x)$ will be estimated as follows:
\begin{align*}
    \||\nabla|^s K_{42}(t,\cdot)\|_{L^\theta} &\lesssim \int_0^{t/2} \int_{|y| \geq t^{1/4}} \||\nabla|^s(\mathcal{G}(t, x-y)-\mathcal{G}(t,x))\|_{L^\theta
    } \mathcal{N}(u(\tau,y)) dyd\tau\\
    &\lesssim t^{-\frac{n}{2}(1-\frac{1}{\theta})-\frac{s}{2}} \int_0^{t/2}\int_{|y| \geq t^{1/4}}  \mathcal{N}(u(\tau,y)) dyd\tau = o(t^{-\frac{n}{2}(1-\frac{1}{\theta})-\frac{s}{2}}), \quad t \gg 1.
\end{align*}
In summary, we gain
\begin{align}
    \||\nabla|^s K_4(t,\cdot)\|_{L^\theta} = o(t^{-\frac{n}{2}(1-\frac{1}{\theta})-\frac{s}{2}}), \quad t \gg 1. \label{Relation4}
\end{align}
Finally, we can control $K_5(t,x)$ as follows:
\begin{align}
    \||\nabla|^s K_5(t,\cdot)\|_{L^\theta} &\lesssim \||\nabla|^s\mathcal{G}(t,\cdot)\|_{L^\theta} \int_{t/2}^\infty \|\mathcal{N}(u(\tau,\cdot))\|_{L^1} d\tau\notag\\
    &\lesssim \varepsilon_0^{1+\frac{2}{n}} t^{-\frac{n}{2}(1-\frac{1}{\theta})-\frac{s}{2}} \int_{t/2}^{\infty} (1+\tau)^{-1} \mu\big(\varepsilon_0 (1+\tau)^{-\frac{n}{2}}\big) d\tau\notag\\
    &= o(t^{-\frac{n}{2}(1-\frac{1}{\theta})-\frac{s}{2}}), \quad t \gg 1, \label{Relation5}
\end{align}
due to
\begin{align*}
    \lim_{t \to \infty} \int_{t/2}^{\infty} (1+\tau)^{-1} \mu\big(\varepsilon_0 (1+\tau)^{-\frac{n}{2}}\big) d\tau = C \lim_{t \to \infty} \int_0^{\varepsilon_0(1+t/2)^{-n/2}} \frac{\mu(\lambda)}{\lambda} d\lambda = 0.
\end{align*}
The relation (\ref{Relation0})-(\ref{Relation5}) immediately imply (\ref{Main.Rela.1}). Therefore, we obtain the asymptotic behavior of the solution in the norms $L^\alpha$ and $\dot{H}^2$. As for the asymptotic behavior in the $L^\infty$ norm, some minor adjustments are needed due to the estimate in Lemma \ref{Lemma3.1}. Nevertheless, the proof proceeds in exactly the same way as in the $L^\alpha$ case and Theorem \ref{Theorem1}. Hence, we conclude the proof of Theorem \ref{Theorem3} here.
\section{Some remarks to sharp lifespan  estimates}\label{Lifespan}
Following the argument in the proof of Theorem \ref{Theorem1}, one obtains the existence of  global (in time) solutions to the Cauchy problem \eqref{Main.Eq.1}. On the other hand, Theorem 5 in \cite{EbeGirRei2020} shows that the solution to \eqref{Main.Eq.1} blows up in finite time for all $n \geq 1$ when the non-Dini condition (\ref{Condition1.2.1}) occurs. Now, we define the following function: 
\begin{align}\label{defPsi}
    \Psi(R):=\displaystyle\int_{1}^{R}\frac{\mu\left(Cr^{-\frac{n}{2}}\right)}{r}dr
\end{align}
for a sufficiently large positive constant $C$. The sharp lifespan estimates for blow-up solutions are determined as follows.
\vspace{0.2cm}

\begin{proposition}[\textbf{Sharp lifespan estimates}]\label{Theorem2}
    Let us consider the problem (\ref{Main.Eq.1}) with the dimension spaces $1\leq n\leq 4$. The  modulus of continuity $\mu(s)$ satisfies the non-Dini condition (\ref{Condition1.2.1})     and the function
     \begin{align}\label{Condition1.2.3}
     \mathcal{N}: s \in \mathbb{R} \to \mathcal{N}(s) := |s|^{1+\frac{2}{n}} \mu(|s|) \in [0, \infty) \text{ is convex }.
     \end{align}
     In addition, we assume that the initial data $(u_0,u_1)\in \mathcal{D}$ satisfies
     \begin{align*}
         \int_{\mathbb{R}^n} (u_0(x)+ u_1(x)) dx > 0.
     \end{align*}
     Then, there exists a positive constant $\varepsilon_0$ such that for any $\varepsilon\in (0,\varepsilon_0]$, the solution $u$ blows up in finite time and the upper bound estimate for
the lifespans
\begin{align}
    T_\varepsilon \lesssim \Psi^{-1}\left(C\varepsilon^{-\frac{2}{n}}\right) \label{Upper_lifespan}
\end{align}
holds. Furthermore, if we replace the condition (\ref{Condition1.2.3}) by (\ref{Condition1.1.2}),
      the lifespan of solution $$u \in \mathcal{C}([0, T_\varepsilon), H^2 \cap L^\alpha \cap L^\infty) 
     $$
      satisfies the lower bound estimate
     \begin{align}
         T_{\varepsilon}\gtrsim \Psi^{-1}\left(c\varepsilon^{-\frac{2}{n}}\right), \label{Lower_Lifespan}
     \end{align}
     for any $\varepsilon\in (0,\varepsilon_0]$.
     Here $C$, $c$ are positive constants depending only on $n,u_0,u_1,\mu$ and $\Psi^{-1}(\tau)$ is the inverse function of the function $\Psi(\tau)$ defined in \eqref{defPsi}.
\end{proposition}
\vspace{0.2cm}

\begin{remark}
\fontshape{n}
\selectfont
 Linking the achieved estimates in Proposition \ref{Theorem2}, one recognizes that the lifespan $T_\varepsilon$ in spatial dimensions $1 \leq n \leq 4$ under the non-Dini condition are determined by the following relations:
 \begin{align*}
   \Psi  (T_{\varepsilon}) \sim \varepsilon^{-\frac{2}{n}}.
 \end{align*}
In fact, under the assumptions of Proposition \ref{Theorem2}, the authors in \cite{ChenGirardi2025} proved (\ref{Upper_lifespan}) for all $n \geq 1$ and (\ref{Lower_Lifespan}) for $n=1$. In addition, the approach in \cite{Girardi2024} can be easily applied to derive an estimate (\ref{Lower_Lifespan}) for $n=2,3$. Therefore, the novelty of Proposition \ref{Theorem2} lies in establishing estimate (\ref{Lower_Lifespan}) for $n=4$.

\end{remark}

 \textbf{Proof of the estimate (\ref{Lower_Lifespan})}.
The proof relies on the argument introduced in \cite{Ikeda2016}, together with the solution spaces and the quantities established in the proof of Theorem \ref{Theorem1}. Specifically, we recall some estimates related to the global existence of small data solutions.
$$
\|u^{\text{lin}}\|_{X(T)} \leq c_{0}^*\varepsilon\|(u_0,u_1)\|_{\mathcal{D}}.
$$
On the other hand, we see that 
\begin{equation*}
\begin{aligned}
\|u^{\text{non}}(t,\cdot)\|_{\dot{H}^2}&\lesssim  (1+t)^{-\frac{n}{4}-1}\|u\|_{X(T)}^{1+\frac{2}{n}} \int_0^{t/2}(1+\tau)^{-1} \mu\left(C(1+\tau)^{-\frac{n}{2}}\right) d\tau\\
&\qquad + (1+t)^{-\frac{n}{4}-1}\|u\|_{X(T)}^{1+\frac{2}{n}} \int_{t/2}^t (1+t-\tau)^{-1} \mu\left(C(1+t-\tau)^{-\frac{n}{2}}\right) d\tau\\
&\lesssim (1+t)^{-\frac{n}{4}-1}\Psi(1+T)\|u\|^{1+\frac{2}{n}}_{X(T)}.
\end{aligned}
\end{equation*}
Similarly, we also have
\begin{equation*}
\begin{aligned}
\|u^{\rm non}(t,\cdot)\|_{L^\infty}&\lesssim (1+t)^{-\frac{n}{2}}\Psi(1+T)\|u\|_{X(T)}^{1+\frac{2}{n}},\\
\|u^{\rm non}(t,\cdot)\|_{L^\alpha}&\lesssim (1+t)^{-\frac{n}{2}(1-\frac{1}{\alpha})}\Psi(1+T)\|u\|_{X(T)}^{1+\frac{2}{n}}.
\end{aligned}
\end{equation*}
As a result, we gain
\begin{equation*}
\|\Phi[u]\|_{X(T)}\leq \frac{c_0 \varepsilon}{2} +c_1\Psi(1+T)\|u\|^{1+\frac{2}{n}}_{X(T)},
\end{equation*}
where $c_0= 2c_0^*\|(u_0,u_1)\|_{\mathcal{D}}$. 
Similarly, we can immediately conclude that
\begin{align*}
    \|\Phi[u]-\Phi[v]\|_{Y(T)} \leq c_2 \Psi(1+T) \|u-v\|_{Y(T)} \left(\|u\|_{X(T)}^{\frac{2}{n}} + \|v\|_{X(T)}^{\frac{2}{n}}\right).
\end{align*}
Therefore, for all $u, v \in X(T, c_0\varepsilon)$, the last two estimates imply that
\begin{align*}
    \|\Phi[u]\|_{X(T)}  &\leq \frac{c_0 \varepsilon}{2} + c_1 \Psi(1+T) c_0 ^{1+\frac{2}{n}}\varepsilon^{1+\frac{2}{n}},\\
    \|\Phi[u]-\Phi[v]\|_{Y(T)} &\leq 2 c_2c_0^{\frac{2}{n}} \varepsilon^{\frac{2}{n}}\Psi(1+T)  \|u-v\|_{Y(T)},
\end{align*}
where constants $c_1$ and $c_2$ independent of $M, \varepsilon$ and $T$. Therefore, if we assume that
\begin{align*}
    \max\{c_1, 2c_1\} c_0^{\frac{2}{n}} \varepsilon^{\frac{2}{n}} \Psi(1+T) < \frac{1}{4},
\end{align*}
then we may construct a unique local solution $u \in X(T , c_0\varepsilon)$ by an argument entirely analogous to the proof of Theorem \ref{Theorem1}. Moreover, the following estimate
holds:
\begin{align}\label{Ine.1}
    \|u\|_{X(T)} = \|\Phi[u]\|_{X(T)} < \frac{3c_0\varepsilon}{4}.
\end{align}
Afterwards, we consider
$$
T^*:=\sup \left\{T \in\left[0, T_{\varepsilon}\right) \text { such that } F(T):=\|u\|_{X(T)} \leq c_0 \varepsilon\right\}. 
$$
If $T^*$ satisfies the inequality (\ref{Ine.1}), then from the previous estimate it follows that  $F(T^*) < 3c_0\varepsilon/4$. 
Since $F(T)$ is a continuous function, there exists $\overline{T}\in (T^*,T_{\varepsilon})$ satisfies $F(\overline{T})\leq c_0\varepsilon$, which contradicts the definition of $T^*$. Hence, we deduce that 
\[
\max\{c_1, 2c_2\}\Psi\left(T^*+1\right)c_0^{\frac{2}{n}}\varepsilon^{\frac{2}{n}}\geq \frac{1}{4},
\]
which leads to
$$
T_{\varepsilon} \geq T^* \gtrsim\Psi^{-1}\left(c\varepsilon^{-\frac{2}{n}}\right).
$$
This completes the proof of the lower bound for the lifespan $T_\varepsilon$.



\section*{Acknowledgment.}
This work is supported by Vietnam Ministry of Education and Training
and Vietnam Institute for Advanced Study in Mathematics under grant number B2026-CTT-04.  The authors would like to thank Prof. Tuan Anh Dao (Hanoi University of Science
and Technology), Prof. Wenhui Chen (Guangzhou University), and Prof. Giovanni Girardi (Università Politecnica delle Marche) for their helpful advice in the preparation of this
paper.

\end{document}